\DeclareFontFamily{OT1}{rsfs}{}
\DeclareFontShape{OT1}{rsfs}{n}{it}{<-> rsfs10}{}
\DeclareMathAlphabet{\mathscr}{OT1}{rsfs}{n}{it}
\DeclareMathOperator{\id}{id}
\DeclareMathOperator{\BS}{BS}
\DeclareMathOperator{\Sym}{Sym}
\DeclareMathOperator{\mo}{\,mod}
\theoremstyle{definition}                                                    
\newtheorem{theorem}{Theorem}
\newtheorem{lemma}[theorem]{Lemma}
\newtheorem{proposition}[theorem]{Proposition}
\newtheorem{corollary}[theorem]{Corollary}
\newtheorem{definition}{Definition}
\newtheorem*{remark}{Remark}
\numberwithin{equation}{section}
\begin{document}
\title{Soficity, short cycles and the Higman group}
\author{Harald A. Helfgott}
\address{Harald A. Helfgott,
Mathematisches Institut,
Georg-August Universit\"{a}t G\"{o}ttingen, Bunsenstra{\ss}e 3-5, D-37073 G\"{o}ttingen, Germany; IMJ-PRG, UMR 7586,
 58 avenue de France, B\^{a}timent S. Germain, case 7012,
 75013 Paris CEDEX 13, France}
\email{helfgott@math.univ-paris-diderot.fr}
\author{Kate Juschenko} 
\address{Kate Juschenko, Department of Mathematics,
Northwestern University, 2033 Sheridan Road, Evanston, IL 60208, USA}
\email{kate.juschenko@gmail.com}

\begin{abstract}
  This is a paper with two aims. First, we show that the map from
  $\mathbb{Z}/p\mathbb{Z}$ to itself defined by exponentiation $x\to m^x$
  has few $3$-cycles -- that is to say, the number of cycles of length three is $o(p)$. 
This improves on previous bounds.

Our second objective is to contribute to an ongoing discussion
on how to find a non-sofic group. In
  particular, we show that, if the Higman group were sofic, there would be
  a map from $\mathbb{Z}/p\mathbb{Z}$ to itself, locally like an
  exponential map, yet satisfying a recurrence property.



\end{abstract}

\maketitle

\section{Introduction}\label{sec:intro}
The questions treated in this paper are motivated in part by
the study of {\em sofic groups}, and, more specifically, by the search
for a non-sofic group. (Non-sofic groups are not, as of the moment of
writing, yet known to exist. See the survey \cite{MR2460675}.) The same 
questions have been studied from
a different angle in coding theory; vd. \cite{MR2644246} and references
therein. 

As we shall discuss, it is natural to test whether the {\em Higman group} is
sofic. We shall show that, if the Higman group were sofic, then there would have
to exist a function $f:\mathbb{Z}/p\mathbb{Z}\to \mathbb{Z}/p\mathbb{Z}$ that
is, to say the least, odd-looking: it would behave locally like an exponential
map almost everywhere,
but $f\circ f\circ f\circ f$ would be equal to the identity.

We do not succeed here in showing that such a function does not exist.
However, we do prove another kind of
result. Just from the triviality of an analogue
of the Higman group, we obtain easily that there is no map $f$ that behaves
locally like an exponential map and has $f\circ f\circ f$ equal to the identity
almost everywhere. If $f$ is actually an exponential map, i.e.,
if $f$ is given by $f(x) = m^x \mo p$ for $x=0,1,\dotsc,p-1$ and some
$m\not\equiv 0,1 \mo p$, then
we can actually prove
that $f\circ f\circ f$ is {\em different} from the identity almost everywhere;
in other words, $f$ has few $3$-cycles. (Here ``few'' means ``$o(x)$''.) 
This improves on the best result known on
$3$-cycles to prime moduli, namely, \cite[Thm. 6]{MR2644246}.
(This is a matter of independent interest; it has been studied before, and
apparently, never in relation to sofic groups.)

Returning to the function $f$ that has to exist if the Higman group is sofic:
our strategy for extracting the existence of such a function from soficity
involves the restriction of a so-called
{\em sofic approximation} to
{\em amenable} subgroups of the Higman group.
All sofic approximations of an
amenable group are, in an asymptotic sense, conjugate; this enables us to
``rectify'' them, i.e., conjugate them so as to take them to sofic approximations with geometric or arithmetic meaning.

In this way, we show that, if the Higman group is sofic, then there is a map
$f$ that behaves locally like an exponential map almost everywhere
and satisfies $f^4=e$. Does this in fact suggest that the Higman group is not sofic?  It would be premature to venture a definite answer.
If $f$ behaves like an exponential map with too few exceptions, then we do arrive at a contradiction by $p$-adic arguments. This can be seen as a hint in the direction of non-soficity.

At the same time, there are two provisos. Shortly after the first draft of the present paper appeared, Glebsky \cite{Gleb} showed that analogous maps $f:\mathbb{Z}/p^n \mathbb{Z} \to \mathbb{Z}/p^n \mathbb{Z}$ do exist when $p|m-1$,
where $m$ is the base of the exponential.\footnote{In the original version of our work,
we had $m=2$; then, of course, $m-1$ has no prime divisors $p$.}
 Such maps come from the fact that generalizations $H_{4,m}$ of the Higman group $H_4$ have finite $p$-quotients. The Higman group $H_4$ itself has no finite quotients, so the construction
 does not immediately apply to the function $f$ arising from it.


Secondly, very recent work by Kuperberg, Kassabov and Riley \cite{kkr} -- motivated in part by the present paper -- shows in a different way that functions
with counterintuitive properties similar to those of our function $f$ exist. The proof has some elements in common with ours; the existence of a function resembling $f$ follows from the existence of a sofic group (not Higman's)
with certain properties. 
We will discuss this matter in \S \ref{sec:heur}.
 The question of the existence of our function $f$ remains open;
so does the soficity of the Higman group.

\subsection{Main results}

Let us start with the simple result on $3$-cycles we just mentioned. Its proof
uses almost no machinery.

\begin{definition}\label{defn:fg}
  Let $m,n \geq 2$ be coprime integers. We define
  $f_{m,n}:\{0,1,\dotsc,n-1\}\to \{0,1,\dotsc,n-1\}$ to be
 the map defined by
\[f_{m,n}(x) \equiv m^x \mo n.\]
\end{definition}

\begin{theorem}\label{thm:mathov}
  Let $m,n \geq 2$ be coprime integers.
  Then \[f_{m,n}(f_{m,n}(f_{m,n}(x)))=x\] can hold for at most $o_m(n)$ elements
$x\in \{0,1,\dotsc,n-1\}$.
\end{theorem}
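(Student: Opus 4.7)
The plan is to reduce the identity $f_{m,n}^3(x)=x$ via discrete logarithms to an analogous identity on a smaller set, and then handle the residual problem by splitting on the size of $d:=\mathrm{ord}_n(m)$. The starting observation is that any $x$ with $f_{m,n}^3(x)=x$ must lie in $\mathrm{image}(f_{m,n})=\langle m\rangle\subseteq(\mathbb{Z}/n\mathbb{Z})^\times$, a cyclic subgroup of cardinality $d$. Writing $x=m^\alpha\bmod n$ for the unique $\alpha\in\{0,\ldots,d-1\}$, one checks by induction --- using that $f_{m,n}(m^\beta\bmod n)=m^{(m^\beta\bmod n)\bmod d}\bmod n$ --- that $f_{m,n}^k(x)=m^{\tau^k(\alpha)}\bmod n$, where $\tau\colon\{0,\ldots,d-1\}\to\{0,\ldots,d-1\}$ is defined by $\tau(\alpha):=(m^\alpha\bmod n)\bmod d$. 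Hence $f_{m,n}^3(x)=x$ is equivalent to $\tau^3(\alpha)=\alpha$, reducing the theorem to counting $3$-periodic points of $\tau$.

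Next I would split on the size of $d$. If $d$ is appreciably smaller than $n$ --- in particular whenever $\varphi(n)/n\to 0$, which covers all $n$ with many small prime divisors --- then the trivial bound $\#\{x:f_{m,n}^3(x)=x\}\le d$ already yields $o(n)$. The hard case is when $d\asymp n$, prototypically $n$ prime with $m$ a primitive root so that $d=n-1$. There $\tau$ is essentially a permutation of $\{0,\ldots,d-1\}$: the integers $m^0,\ldots,m^{d-1}\bmod n$ are distinct elements of $\{1,\ldots,n-1\}$ whose reductions modulo $d\sim n$ have fibres of size $O(1)$, so any argument based solely on the image size of $\tau^j$ for $j\le 3$ saturates at $\sim n$ and gives nothing.

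For the residual case the idea is to count $3$-cycles arithmetically. A $3$-cycle $(\alpha,\beta,\gamma)$ of $\tau$ corresponds to a triple of integers $(a,b,c)\in\{1,\ldots,n-1\}^3$ satisfying \emph{both} the exponential relations $b\equiv m^a$, $c\equiv m^b$, $a\equiv m^c\pmod n$ \emph{and} the discrete-log congruences $a\equiv\beta$, $b\equiv\gamma$, $c\equiv\alpha\pmod d$. Once $\alpha$ is fixed, $\beta$ and $\gamma$ are forced, and the closing-up condition $\tau(\gamma)=\alpha$ is heuristically satisfied with probability $\sim 1/d$, giving an expected total of $O(1)$. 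I expect a second-moment estimate or a pigeonholing across residue classes modulo $d$ to convert this heuristic into a rigorous $o(n)$ bound.

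The main obstacle is making the closing-up step rigorous in the primitive-root regime. A further iteration of the discrete-logarithm reduction merely reproduces essentially the same problem on a set of nearly the same cardinality, and pure image-size bounds do not shrink. What is needed is an elementary distributional statement --- roughly, that the iterated exponential $\alpha\mapsto(m^{m^{m^\alpha}}\bmod n)\bmod d$ is sufficiently mixing for its graph to meet the diagonal in only $o(d)$ places. Since the authors emphasise ``almost no machinery'', I expect this last step to be achieved by an ad hoc elementary counting argument rather than by Weil-type character sum bounds.
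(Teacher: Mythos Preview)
Your proposal has a genuine gap: the hard case ($d=\mathrm{ord}_n(m)\asymp n$) is not proved. You reduce correctly to counting $3$-periodic points of $\tau$ on $\{0,\ldots,d-1\}$, but from that point on you offer only a heuristic (``closing-up probability $\sim 1/d$'') together with the hope that a second-moment or pigeonhole argument will finish it. You yourself flag this as ``the main obstacle''. Nothing in your write-up indicates how to turn the heuristic into an actual inequality, and in fact the paper's method gives only a saving of the shape $n^{1-\eta_{m,\delta}}$ with a minuscule $\eta_{m,\delta}$, far from the $O(1)$ your heuristic predicts --- so an argument built around that probability model would have to be quite delicate.

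The paper's proof is entirely different from your outline and does not pass through discrete logarithms or distributional statements. It runs as follows. Assume $f_{m,n}^3(x)=x$ on a set $X$ with $|X|\ge\delta n$. By a pigeonhole (Poincar\'e-recurrence) step, there is a bounded shift $1\le k<2/\delta$ such that $x$ and $x+k$ both lie in $X$ for $\gtrsim\delta^2 n$ values of $x$. Subtracting the two identities $f_{m,n}^3(x)=x$ and $f_{m,n}^3(x+k)=x+k$ and writing $y=f_{m,n}^2(x)$ gives
\[
f_{m,n}(y)+k=f_{m,n}\bigl(\overline{c' y^{m^k}}\bigr)
\]
for some $c'$ from a bounded list. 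A second recurrence step, now multiplicative (find bounded $\ell=m^r$ with both $y$ and $\ell y$ satisfying this), and the substitution $z=f_{m,n}(y)$ yield a non-trivial polynomial congruence
\[
(z+k)^{\ell^{m^k}}-m^{\kappa n}(z^\ell+k)\equiv 0\pmod n
\]
of bounded degree. Konyagin's bound on roots of polynomials modulo $n$ then caps the number of such $z$ by $O_{m,\delta}(n^{1-\eta_{m,\delta}})$, contradicting the lower bound $\gtrsim_{m,\delta} n$ on the number of admissible $z$. The key idea you are missing is precisely this double recurrence (additive, then multiplicative) that collapses the triple-exponential structure down to a single polynomial equation.
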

Here, as usual, $o_m(n)$ means ``a function $b_m(n)$ such that 
$\lim_{n\to\infty} b_m(n)/n = 0$'', where the subscript $m$ warns us that
the function $b_m(n)$ may depend on $m$. We also use the notation
$O_m(n)$, meaning ``a function $B_m(n)$ such that 
$B_m(n)/n$ is bounded''. As is standard, we also use $o(n)$ or $O(n)$ without
subscripts when dependencies are non-existent or obvious.

Compare Theorem \ref{thm:mathov}
to \cite[Thm. 6]{MR2644246}, which states that, for $p$ prime, the number of
$x$ for which $f_{m,p}(f_{m,p}(f_{m,p}(x)))=x$ is $\leq 3p/4 + O_m(1)$. (The number of such 
$x$ is of interest in part because of the study of $f_{m,p}$ in the context of
the generation of pseudorandom numbers:
see the references \cite{MR1670959}, \cite{MR1982971}
given in \cite{MR2644246}.)

While the results analogous to Theorem \ref{thm:mathov} with $f_{m,n}(x)$ or
$f_{m,n}(f_{m,n}(x))$ instead of $f_{m,n}(f_{m,n}(f_{m,n}(x)))$ are very easy,
the problem with more than $3$ iterations is hard if $n$ is a prime. Indeed,
for $n$ prime and $m$ such that $m\mo n$ generates
$(\mathbb{Z}/n\mathbb{Z})^*$,
we do not know how to prove that
\begin{equation}\label{eq:hutor}f_{m,n}(f_{m,n}(f_{m,n}(f_{m,n}(x)))) = x\end{equation}
  can hold for at most $o(n)$ elements $x$ of $\{0,1,\dotsc,n-1\}$, or even that
  it can hold for at most $n-1$ elements of $\{0,1,\dotsc,n-1\}$.
  
On the other hand, we do have good upper bounds (namely, $< p^k$)
on the number of solutions
to
\begin{equation}f_{m,n}(f_{m,n}(\dotsb(f_{m,n}(x)))) = x
\;\;\;\;\;\;\;\;\;\;\;\;\text{($k$ times)}\end{equation}
when $n=p^r$, $r\geq k+1$
\cite[Cor. 3]{MR3118384}, \cite[Thm. 5.7]{MR2999154}.
The argument in  \cite{MR2999154} is based on $p$-adic analysis, whereas 
\cite{MR3118384} is based on explicit matrix computations -- though
it arguably still has Hensel's lemma at its core.

\begin{center}
  * * *
\end{center}

There turns out to be a relation between counting solutions to
(\ref{eq:hutor}) and an important problem in asymptotic group theory,
namely, that of constructing a {\em non-sofic group}.
If (\ref{eq:hutor}) (with $m=2$) could hold for $(1-o(1)) n$ elements of
$\{0,1,\dotsc,n-1\}$, $n$ odd, then a non-trivial quotient of
the {\em Higman group}
would be sofic, as one can easily see from the definitions (which we are about
to give). More interestingly, as we are about to see,
if the Higman group were in fact sofic, then
there would be a map $f$, locally like $f_{2,n}$, such that
\[f(f(f(f(x)))) = x\]
would hold for $(1-o(1)) n$ elements of $\{0,1,\dotsc,n-1\}$ (where we can
take $n$ to be a prime, or even a power $p^r$, $r\geq 5$, say).

Let us recall some standard notation. We write $\Sym(n)$ for the symmetric group, i.e.,
the group of all permutations of a set with $n$ elements.
The (normalized) {\em Hamming distance} $d_h(g_1,g_2)$
between two permutations
$g_1, g_2\in \Sym(n)$ is defined to be the number of elements at which
they differ, divided by $n$:
\[d_h(g_1,g_2) = \frac{1}{n} |\{1\leq i\leq n: g_1(i)\ne g_2(i)\}|,\]
where we write $|S|$ for the number of elements of a set $S$.
It is clear that $d_h$ is an (left- and right-) invariant metric on $\Sym(n)$.
Write $\id$ for the identity element of $\Sym(n)$.

\begin{definition}\label{def:sofic}
  Let $G$ be a group. For $n\in \mathbb{Z}^+$,
  $\delta>0$ and $S\subset G$ a finite subset,
  an {\em $(S,\delta,n)$-sofic approximation} is a map
  $\phi:S\to \Sym(n)$ such that
\begin{enumerate}
\item\label{it:amarg} 
 $d_h(\phi(g) \phi(h),\phi(g h)) <\delta$
  for all $g, h\in S$ such that $g h\in S$ (``$\phi$ is an approximate
  homomorphism''),
\item\label{it:omorg} $d_h(\phi(g),\id)>1-\delta$ for all $g\in S$ distinct from the identity $e$ (i.e., the image of every $g\ne e$ has few fixed points).
\end{enumerate}

We say that the group $G$ is {\em sofic} if, for every finite subset $S\subset G$ and every $\delta>0$, there is an $(S,\delta,n)$-sofic approximation
for some $n\in \mathbb{Z}^+$.
\end{definition}
It is easy to see that (\ref{it:amarg}) implies that $d_h(\phi(e),\id)<\delta$
for the identity $e\in G$, provided that $e\in S$.

The notion of sofic groups goes back to the work of Gromov, who used a different, but equivalent, definition. See \cite{MR2460675} for a survey. It is clear that,
if a group is sofic, all of its subgroups are sofic as well. It is also immediate
that, if a group is {\em not} sofic, then it has a finitely generated (and,
in particular, countable) subgroup that is not sofic either.

\begin{definition}
  Let $n,m\geq 2$. We denote by $H_{n,m}$ the group generated by
  elements $a_1, a_2,\dotsc, a_n$ 
subject to the following relations:
\[a_i^{-1} a_{i+1} a_i = a_{i+1}^m\;\;\;\;\text{(for $1\leq i<n$)},
\;\;\;\;\;\;\;\;\;
a_n^{-1} a_1 a_n = a_1^m.\]
The group $H_4 = H_{4,2}$ is called the {\em Higman group}.
\end{definition}
The group $H_{2,2}$ is trivial (this is easy). The group $H_{3,2}$ is trivial as well;
this is shown at the end of \cite{MR0038348},
where the proof is credited to K. A. Hirsch.

The Higman group was first constructed as an example of a group
without finite quotients \cite{MR0038348}. It is not known whether it has
amenable quotients. (We will go over the concept of {\em amenability} in
\S \ref{sec:amensof}; amenability implies soficity.\footnote{On the other hand,
 there is
  a sofic group for which there is no sequence of amenable groups converging
  to it in the Chabauty topology \cite{MR2794910}.})
Because of this, as well as for other reasons
(see \cite{MR2900231}), the Higman group $H_4 =
H_{4,2}$ can be seen as a plausible candidate for a non-sofic group.

What is more -- $H_4$ is {\em SQ-universal} \cite{MR0291298}, meaning that
every countable group is isomorphic to some subgroup of some quotient of
$H_4$. This implies immediately that, if a non-sofic group exists, then
there exists a non-sofic quotient $H_4/N$ of $H_4$.

\begin{theorem}\label{thm:jut}
  Let $m\geq 2$.
  Assume that the group $H_{4,m}$ is sofic. Then, for every $\epsilon>0$,
there is an $N>0$ such that, for every $n\geq N$ coprime
to $m$, there is a bijection
$f:\mathbb{Z}/n\mathbb{Z} \to \mathbb{Z}/n\mathbb{Z}$ such that
\begin{equation}\label{eq:cdr}f(x+1) = m f(x)\end{equation}
for at least $(1-\epsilon) n$ elements $x$ of
$\mathbb{Z}/n\mathbb{Z}$, and
\begin{equation}\label{eq:crossbolt}
f(f(f(f(x)))) = x\end{equation}
for all $x\in \mathbb{Z}/n\mathbb{Z}$.
\end{theorem}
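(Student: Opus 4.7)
The plan is to exploit the four Baumslag--Solitar subgroups $B_j = \langle a_j, a_{j+1}\rangle \cong \BS(1,m)$ of $H_{4,m}$ (indices taken mod $4$, so $B_4 = \langle a_4, a_1\rangle$), each of which is solvable and hence amenable. Starting from a sofic approximation $\phi: S \to \Sym(N)$ of $H_{4,m}$, with $S$ a sufficiently large finite set (containing the generators, their inverses, the short words witnessing the relations, and enough F\o{}lner data for each $B_j$) and $\delta$ taken very small compared to $\epsilon$, I restrict $\phi$ to each $B_j$ to obtain an approximate representation of an amenable group.

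The second step is rectification. For $\BS(1,m)=\langle a,b \mid a^{-1}ba = b^m\rangle$ and $\gcd(m,n)=1$, the assignment $b\mapsto T$, $a\mapsto M$ with $T(x)=x+1$ and $M(x)=m^{-1}x$ on $\mathbb{Z}/n\mathbb{Z}$ is a genuine representation (one checks $M^{-1}TM(x) = m(m^{-1}x+1) = x+m = T^m(x)$), and yields the canonical sofic approximation of $\BS(1,m)$. I then invoke the Elek--Szab\'o type rigidity theorem for sofic approximations of amenable groups to obtain, for each $j$, a conjugator $g_j\in\Sym(n)$ such that $g_j\phi(a_{j+1})g_j^{-1}\approx T$ and $g_j\phi(a_j)g_j^{-1}\approx M$ on a subset of $\mathbb{Z}/n\mathbb{Z}$ of relative density $1-o(1)$. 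Arranging that the underlying size is precisely the prescribed $n$ (rather than some $N$ produced by the definition of soficity) requires extracting an $n$-element, $\phi$-approximately invariant subset of $\{1,\dots,N\}$, using F\o{}lner sets inside the amenable subgroups of $H_{4,m}$.

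Each generator $a_j$ appears in two adjacent subgroups: in $B_{j-1}$ it plays the role of the ``base'' (so $g_{j-1}\phi(a_j)g_{j-1}^{-1}\approx T$), while in $B_j$ it is the ``conjugator'' (so $g_j\phi(a_j)g_j^{-1}\approx M$). Setting $f_j := g_j g_{j-1}^{-1}$, comparison of these two rectifications yields the intertwining $f_j T f_j^{-1}\approx M$, equivalently $f_j(x+1) = m^{-1}f_j(x)$ on $(1-o(1))n$ values of $x$. Passing to $f_j^{-1}$ (or relabelling $x\mapsto -x$) converts base $m^{-1}$ to base $m$, giving exactly the approximate exponential condition (\ref{eq:cdr}). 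The cyclic Higman relations force the indexing to wrap around ($g_0 = g_4$ under $B_0 = B_4$), so the product telescopes \emph{exactly}: $f_4 f_3 f_2 f_1 = g_4 g_3^{-1} g_3 g_2^{-1} g_2 g_1^{-1} g_1 g_4^{-1} = \id$.

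The main obstacle is to pass from four approximate-exponential maps whose product is the identity to a \emph{single} bijection $f$ of exact order dividing $4$ that still satisfies (\ref{eq:cdr}). The remaining freedom in each rectification is composition of $g_j$ with an element of the centraliser of $\{T,M\}$ in $\Sym(n)$, namely translations $T^c$ with $c(m-1)\equiv 0\mo n$. Using this freedom (and, if necessary, working across several cyclic copies of the sofic approximation to enlarge the centraliser), I would aim to align the four $f_j$ to a common permutation $f$, whereupon $f^4 = f_4 f_3 f_2 f_1 = \id$ exactly. When exact alignment is not possible, a weaker route is to take $f := f_1$, observe that the telescoping and the approximate exponentiation force $f^4 = \id$ off a set of size $o(n)$, and then redefine $f$ on an exceptional set of cardinality at most $\epsilon n$ so as to enforce $f^4 = \id$ while preserving (\ref{eq:cdr}). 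Managing the several small error sets that accumulate through the four rectifications -- and keeping the final $n$ coprime to $m$ -- is the delicate technical heart of the argument.
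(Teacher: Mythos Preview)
Your framework of rectifying each of the four Baumslag--Solitar subgroups separately does produce four intertwiners $f_j$ with $f_4 f_3 f_2 f_1 = \id$ exactly, but the step where you collapse these to a \emph{single} $f$ with $f^4=\id$ is a genuine gap, and neither of your proposed fixes works. The centraliser of $\{T,M\}$ in $\Sym(\mathbb{Z}/n\mathbb{Z})$ consists of translations by $c$ with $(m-1)c\equiv 0\ (\mathrm{mod}\ n)$; when $\gcd(m-1,n)=1$ this is trivial, so there is no freedom to align the $f_j$. Your fallback of setting $f:=f_1$ and asserting that ``telescoping and approximate exponentiation force $f^4=\id$ off a small set'' is not justified: each $f_j$ conjugates $T$ to $M$ up to small error, but such intertwiners are far from unique (morally they look like $x\mapsto c_j m^{-x}$ for arbitrary constants $c_j$), and $f_4f_3f_2f_1=\id$ imposes only a single relation among the four constants, not equality. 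Since the $g_j$ are pinned down up to the (tiny) centraliser of $\{T,M\}$, the $f_j$ are essentially determined by $\phi$, and nothing in a sofic approximation of $H_{4,m}$ forces them to coincide.

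The missing idea, and the route the paper takes, is to make the cyclic symmetry \emph{inner} before starting. One passes to $G=(\mathbb{Z}/4\mathbb{Z})\ltimes H_{4,m}$, which is still sofic since extensions of sofic groups by amenable (here finite) groups are sofic. Now there is a genuine group element $t$ with $t a_j t^{-1}=a_{j+1}$ and $t^4=e$. One rectifies only the \emph{single} copy $\langle a_1,a_2\rangle\cong\BS(1,m)$ via some $\tau$, and then sets $g:=\tau\,\phi(t^{-1})\,\tau^{-1}$. The relation $ta_1t^{-1}=a_2$ yields $g(y+1)=m\,g(y)$ for $(1-O(\epsilon))n$ values, while $t^{-4}=e$ gives $g^4=\id$ off a small set; one then redefines $g$ to the identity on the bad orbits to get $f^4=\id$ everywhere. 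In effect, the semidirect product supplies for free the single permutation playing the role you hoped all four $f_j$ would simultaneously play.
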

This result has a very easy almost-converse: if, for $\epsilon>0$ arbitrary,
there are $n$ and $f:\mathbb{Z}/n\mathbb{Z}\to \mathbb{Z}/n\mathbb{Z}$  such that  (\ref{eq:cdr}) holds for at least $(1-\epsilon) n$ elements of
$\mathbb{Z}/n\mathbb{Z}$, then the Higman group has arbitrarily large
sofic quotients. (That is: it has either an infinite sofic
quotient, or arbitrarily large finite quotients.) In fact, this
is precisely what happens with $H_{4,m}$, $m>2$, as Glebsky \cite{Gleb} first pointed
out, and as we will discuss later.

If we assume that a given quotient $H_{4,m}/N$, $m$ arbitrary, is sofic,
then the proof
of Theorem 2 can be modified trivially to give not just the same conclusion,
but a stronger one, including equalities other than (\ref{eq:cdr}) and
(\ref{eq:crossbolt}). The same holds, in general, whenever we study a group $G$
into which the Baumslag-Solitar group\footnote{Baumslag-Solitar groups
  are defined in (\ref{eq:harmo}).} $\BS(1,m)$ embeds, whether or not this group $G$ is
a quotient of a group $H_{4,m}$; the proof of Theorem 
\ref{thm:jut} can be easily modified to show that the assumption that $G$ is
sofic implies that there is a bijection
$f:\mathbb{Z}/n\mathbb{Z} \to \mathbb{Z}/n\mathbb{Z}$ such that
(\ref{eq:cdr}) holds together with some other conditions in the place of
(\ref{eq:crossbolt}).
We will discuss this at the end of \S \ref{sec:lindur}.


We can choose to focus on $n$ prime, or, instead,
on $n$ a high power of a fixed prime, since then the
statement goes in the opposite direction to the results from
\cite[Cor. 3]{MR3118384}, \cite[Thm. 5.7]{MR2999154} we have mentioned, in
the sense that the consequence of soficity it points out is the negation
of a hypothetical stronger form of such results.
We will use the same $p$-adic tools as
in \cite{MR2999154} to prove the following statement.
\begin{proposition}\label{prop:norvi}
Let $m>1$, $p\nmid (m-1)$.
  Let a bijection $f:\mathbb{Z}/p^r \mathbb{Z} \mapsto \mathbb{Z}/p^r
  \mathbb{Z}$, $r\geq 1$ be given. Then either
\[f(f(f(f(x)))) \ne x\]
for at least $p^r/2$ values of $x\in \mathbb{Z}/p^r \mathbb{Z}$, or
  \begin{equation}\label{eq:anto}
    f(x+1) \ne m f(x)\end{equation}
  for at least $p^{r/4-1}/2^{1/4}$ 
values of $x\in \mathbb{Z}/p^r \mathbb{Z}$.
\end{proposition}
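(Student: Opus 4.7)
The plan is to argue by contradiction: assume $f$ violates both conclusions, so that
\[
|\{x\in\mathbb{Z}/p^r\mathbb{Z}:f(f(f(f(x))))=x\}|>p^r/2
\]
while the set
\[
B:=\{x\in\mathbb{Z}/p^r\mathbb{Z}:f(x+1)\ne m\,f(x)\}
\]
has cardinality $K:=|B|<p^{r/4-1}/2^{1/4}$. (When $K=0$, the map $f$ is a single shifted exponential on all of $\mathbb{Z}/p^r\mathbb{Z}$ and the conclusion follows directly from \cite[Thm.~5.7]{MR2999154}; henceforth assume $K\geq 1$.) Regarding $\mathbb{Z}/p^r\mathbb{Z}$ as a cycle, $B$ partitions it into $K$ maximal arcs $I_j=[a_j,b_j]$ on which the recurrence $f(x+1)=m\,f(x)$ holds throughout; iterating it gives
\[
f(x)=c_j\,m^{x-a_j}\qquad(x\in I_j),\qquad c_j:=f(a_j),
\]
so that the restriction of $f$ to each arc is a genuine exponential.

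Next I classify each fixed point $x$ of $f\circ f\circ f\circ f$ by the ordered tuple $(I_{j_0},I_{j_1},I_{j_2},I_{j_3})$ of arcs containing $x$, $f(x)$, $f(f(x))$, $f(f(f(x)))$, respectively. There are at most $K^4<p^{r-4}/2$ such tuples, and on each one the condition $f(f(f(f(x))))=x$ unfolds into a single nested-exponential equation
\[
x\equiv c_{j_3}\,m^{\,c_{j_2}\,m^{\,c_{j_1}\,m^{\,c_{j_0}\,m^{x-a_{j_0}}-a_{j_1}}-a_{j_2}}-a_{j_3}}\mo p^r,
\]
subject to the arc-containment constraints $f^{k}(x)\in I_{j_k}$.

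The technical core is to show that each such nested equation has fewer than $p^4$ solutions in $\mathbb{Z}/p^r\mathbb{Z}$. To do this I would lift $p$-adically: writing each intermediate integer variable as $y=(p-1)t+\epsilon$ with $\epsilon\in\{0,\dotsc,p-2\}$, on each fixed choice of residues the map $t\mapsto m^{(p-1)t}$ becomes $p$-adic analytic on $\mathbb{Z}_p$, since $m^{p-1}\equiv1\pmod{p}$ by Fermat's little theorem. The hypothesis $p\nmid m-1$ keeps the valuation $v_p(m^{p-1}-1)$ at its generic (small) level, which is precisely what is needed for the fourfold composition to have a controllable Newton polygon. A Hensel/Newton-polygon argument in the style of \cite[Thm.~5.7]{MR2999154} and \cite[Cor.~3]{MR3118384} then gives fewer than $p^4$ solutions per tuple (summed over all residue choices of the $\epsilon$'s). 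Combining with the tuple count yields strictly fewer than $K^4\cdot p^4<p^r/2$ solutions of $f(f(f(f(x))))=x$, contradicting the starting assumption.

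The principal obstacle I anticipate is this $p$-adic bound itself. The cited works treat a single honest exponential $f_{m,p^r}$, whereas here the fourfold composition is \emph{twisted}: the base-point data $(a_{j_k},c_{j_k})$ may change at each level. Verifying that the Newton-polygon bound is robust under such twists, and tracking the $p$-adic valuations uniformly through a composition of piecewise-exponential maps with varying base points, is where the real work lies; the hypothesis $p\nmid m-1$ is exactly what keeps those valuations in the regime the argument requires at every level.
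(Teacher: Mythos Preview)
Your overall architecture matches the paper's: argue by contradiction, break $\mathbb{Z}/p^r\mathbb{Z}$ into $K$ arcs on which $f$ is an honest exponential, classify each fixed point of $f^4$ by a $4$-tuple of arcs, and bound solutions per tuple. The difference is entirely in how the ``technical core'' is handled, and here the paper is much simpler than you anticipate. Rather than leaving the base as $m$ and decomposing each intermediate variable as $(p-1)t+\epsilon$, the paper substitutes once at the outset, passing to a function $g$ that satisfies $g(x+1)=s\,g(x)$ with $s=m^{p-1}$ off a bad set of size at most $(p-1)K$. Since $s\equiv 1\pmod p$, the map $x\mapsto s^x$ is well-defined on $\mathbb{Z}/p^r\mathbb{Z}$ and is a $p$-adic contraction: $x\bmod p^k$ determines $s^x\bmod p^{k+1}$. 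The fixed-point equation $G(x_1,x_2,x_3,x_4)=(c_{j_4}s^{x_4},c_{j_1}s^{x_1},c_{j_2}s^{x_2},c_{j_3}s^{x_3})=(x_1,x_2,x_3,x_4)$ then has \emph{at most one} solution, by iterated lifting from $\bmod\,p$ to $\bmod\,p^r$---no Newton polygons needed. The factor you expected to pay as ``$<p^4$ solutions per tuple'' is instead paid as the $(p-1)^4$ enlargement of the arc count; the final arithmetic is identical and yields the bound $p^{r/4-1}/2^{1/4}$.

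Your worry about ``twists'' is misplaced: the constants $c_{j_k}$ and shifts $a_{j_k}$ are present in the paper's version too, and the contraction argument is completely insensitive to them (constants lift along with everything else). The condition $p\nmid m-1$ is not used to control valuations in a Newton-polygon sense; it is not explicitly invoked in the paper's proof at all beyond ensuring the setup is nondegenerate. So your sketch is on the right track, but the obstacle you flag dissolves after the single substitution $m\rightsquigarrow m^{p-1}$.
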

Of course, this is unfortunately much too weak to contradict
the conclusion of Theorem \ref{thm:jut}; for that, we would need
(\ref{eq:cdr}) to hold for $> p^r - p^{r/4-1}/\sqrt{2}$ values of $x$, not
just for $(1-\epsilon) p^r$ values of $x$. As we now know,
the conclusion of Theorem \ref{thm:jut} actually holds for $m>2$.

\begin{center}
  * * *
\end{center}

It is easy to give a probabilistic argument (\S \ref{sec:heur})
that the existence of a function $f$ such as that given by Theorem \ref{thm:jut}
is implausible (for $\epsilon<1/4$), and hence that it is also implausible
that the Higman group be sofic. Let us emphasize that this argument is merely
heuristic; it is very far from a proof.
The arithmetical flavor of Theorem \ref{thm:jut} might seem
to lend some credence
to the heuristics,
in so far as an assumption of independence of certain kinds of random variables 
underlies both the argument here and several classical
conjectures in number theory. However, we are not in a context that is fully
familiar to a number theorist, in particular, due to the large number of
``exceptions to the rule'' (namely, $\epsilon n$).

Moreover -- and this is important -- the work of both Glebsky \cite{Gleb} and
Kuperberg, Kassabov and Riley \cite{kkr} seems to cast doubt on these
heuristics. Glebsky's example assumes $n=p^k$, $p|(m-1)$. However,
by the argument at the beginning of the proof of Theorem \ref{thm:jut}
(\S \ref{sec:lindur}), it implies the existence of a function just like that 
in the conclusion of Theorem \ref{thm:jut} for every $m>2$ and
for every $n$ larger than a constant depending only on $m$ and $\epsilon$.

The idea behind Glebsky's construction may be summarized as follows.
If $n=p^k$ and $p|(m-1)$, then, evidently,  $m^x \equiv 1 \mo p$;
moreover, $\mathbb{Z}/p^k\mathbb{Z}$ has a tower of
subgroups $G_l=\mathbb{Z}/p^l \mathbb{Z}$ such that $x\to m x$ acts
on $G_l/G_{l+1}$ as the identity. It makes sense 
that iterations of such a map would tend to have fixed points.

The maps in \cite{kkr} are also of the type in Theorem \ref{thm:jut},
though there $m$ is not constant, but grows slowly with $n$.
As \cite{kkr} shows, these maps, like those coming from Glebsky's work,
go against the probabilistic heuristics discussed here.
Such heuristics must thus be taken with extreme circumspection, to say
the very least.

Returning to rigor: is it possible to give conclusions stronger than those
of Thm. \ref{thm:jut} if we make assumptions on ``how sofic'' $H_4$ is, i.e.,
assumptions on the dependence of $n$ on $S$ and $\delta$ in
Definition \ref{def:sofic}? (Such assumptions have been formalized in
different ways, as {\em sofic dimension growth} \cite{ArzhCher} and as
{\em sofic profile} \cite{MR3160544}.) This
question is related to that of strengthening the
methods we are about to discuss (\cite{MR2823074}, \cite{MR3068400}), or,
more generally, to the problem of giving versions of results on
{\em stability} and {\em weak-stability} \cite{MR3350728} with good bounds.
We will address these matters in \S \ref{sec:furthq}, but do not solve them.

To go back to Thm.\;\ref{thm:mathov}: a result resembling Thm.\;\ref{thm:mathov}, but with weaker conditions and  conclusions, follows easily
from the fact that $H_3$ is trivial. We will go over this at the beginning of
\S \ref{sec:fecy}. It would be interesting if the triviality of $H_3$ could
be used to prove Thm.\;\ref{thm:mathov} itself (or a statement with the same
conclusions but weaker conditions).
As we said, the proof of Thm.~\ref{thm:mathov} we give
requires next to no tools, though some will recognize the idea of Poincar\'e
recurrence at work.

\subsection{Methods}

The main tool used towards the proof of Theorem \ref{thm:jut} is the fact
that any two sofic representations of an amenable group are conjugate to each
other. What this means is that, if $G$ is an {\em amenable} group
(we shall recall the definition) generated by a finite set $S\subset G$,
and $\phi, \phi'$ are two $(S', \delta,n)$-sofic
approximations of $G$  with $S'\supset S$ large enough and $\delta$ small enough, then
there is a bijection $\tau$
from $\{1,\dotsc,n\}$ to itself such that, for every $s\in S$,
$\tau\circ \phi(s) \circ \tau^{-1}$
equals $\phi'(s)$ at almost all points
(i.e., the Hamming distance between $\tau\circ \phi(s) \circ \tau^{-1}$ and
$\phi'(s)$ is small).

While the group $H_{4,m}$ is not amenable, we can take $G$ to be an amenable subgroup of $H_{4,m}$
(to wit, the Baumslag-Solitar group $\BS(1,m)$). It is easy to see that
$G=\BS(1,m)$ has a sequence of sofic
approximations $\phi_k':G\to \Sym(n_k)$ with a
natural arithmetical description.
Hence, if $\{\phi_k\}$ is a sequence of sofic approximations of $H_{4,m}$,
the restriction of $\phi_k$ to $G$ must
be conjugate to $\phi_k'$. This constrains $\phi_k$ severely; the same sequence
of bijections $\tau'$ that show $\phi_k|_G$ to be conjugate to $\phi_k'$ shows that $\phi_k$
is conjugate to a sequence of maps having the properties given to $f$ in
Theorem \ref{thm:jut}.

In fact, we will be working with $(\mathbb{Z}/4\mathbb{Z})\ltimes H_{4,m}$ rather
than $H_{4,m}$, so as to strengthen the constraints on $\phi_k$. The reason
we can proceed in this way is that, 
if $H_{4,m}$ is sofic, then so is $(\mathbb{Z}/4\mathbb{Z})\ltimes H_{4,m}$,
since any extension of a sofic group by an amenable group
(such as $\mathbb{Z}/4\mathbb{Z}$) is sofic \cite{MR2220572}.

The fact that any two sofic representations of an amenable group are conjugate to
each other is something that has been stated and proved in different ways.
It was proved by Elek and Szabo \cite{MR2823074} using ``infinitary'' language
(ultraproducts, which depend on the axiom of choice).
We will use a ``finitary'' statement (based on a slight refinement
of \cite[Lem.~4.5]{MR3068400}) from which effective bounds could be easily
extracted.

{\bf Acknowledgements.} We would like to thank Sean Eberhard for suggesting
a simplification of the proof of Thm.~\ref{thm:mathov} and David Kerr for his remarks on
\cite{MR3068400} and \cite{MR3130315}. We are also grateful to
Goulnara Arzhantseva,
Ken Dykema, Lev Glebsky,
Martin Kassabov, Vivian Kuperberg, Alex Lubotzky, Tim Riley,
Andreas Thom and Nikolai Nikolov for stimulating discussions of soficity, to 
Igor Shparlinski for pointing out a gap in the first version of our preprint,
to Pablo Candela for a useful reference on generalizations of Rokhlin's 
Lemma, and to an anonymous referee.

Part of the work towards this paper was carried out
while H. Helfgott visited the Chebyshev Laboratory (St.~Petersburg)
and IMPA (Rio de Janeiro) and K. Juschenko visited the Weizmann Institute
(Rehovot) and the Bernoulli Centre (Lausanne). H. Helfgott is currently
supported by ERC Consolidator
grant GRANT and by funds from his Humboldt Professorship.
%

\section{Amenability and sofic approximations: tools and background}\label{sec:amensof}

Let $G$ be a group with a finite generating set $S$. In this case,
one of the (mutually equivalent) standard definitions of amenability
is as follows: $G$ is said to be
{\em amenable} if there exists
 an infinite sequence (called a {\em F\o lner sequence}) of increasing sets 
 $$e\in F_1\subset F_2\subset \ldots \subset F_k\subset \ldots
 \subset G$$
 such that $|s F_j\Delta F_j|\leq |F_j|/j \text{ for all } s \in S$ and all $j\geq 1$. 

 The aim of this section is to prove Prop.~\ref{prop:labil}, which states,
 in effect, that all sofic representations of an amenable group are conjugate.
 This is a key recent result that is neither new nor ours; nevertheless,
 we will have to give a proof, since we have not been able to find it
 in the literature in the concrete form we need (though its meaning is
 identical to that of \cite[Thm.~2]{MR2823074}, or rather the
 difficult direction thereof).
 
 The alternative would have been to derive
 Prop.~\ref{prop:labil} (a finitary statement) from
 \cite[Thm.~2]{MR2823074}, which uses ultraproducts in its proof and
 formulation.
   This would be much as in \cite{MR3350728}.

   \begin{center}
     * * *
   \end{center}
   
Now, given any $\eta>0$, we can actually assume that  
\begin{equation}\label{eq:huhuhu}
  |(F_{j-1}^{-1} F_{j}) \setminus F_{j}|\leq \eta |F_{j}|\end{equation}
  for all $j>1$, simply by replacing $\{F_j\}_{j\geq 1}$ by a subsequence, if necessary.
  
As it turns out, sofic approximations of amenable groups decompose
particularly nicely: any such approximation has an
almost-covering by trivial approximations of $F_j$.
Let us give a precise statement.
We take the following nomenclature from \cite{MR910005}: 
given $\epsilon\geq 0$ and a finite set $D$, we say
a collection $\{A_i\}_{i\in I}$ of subsets of $D$ is {\em $\epsilon$-disjoint}
if there exist pairwise disjoint subsets $A_i'\subset A_i$ such that
$|A_i'|\geq (1-\epsilon) |A_i|$, and that it
{\em $(1-\epsilon)$-covers} $D$ if $|\cup_{i\in I} A_i|\geq (1-\epsilon) |D|$.)

\begin{lemma}\label{lem:buend}
  Let $G$ be a group.
  For any $\epsilon, \kappa>0$, there
  are $k\geq 1$ and
  $\lambda_1,\dotsc,\lambda_k \in (0,1\rbrack$ with
  $1-\epsilon \leq \lambda_1+\dotsb+\lambda_k \leq 1$ such that
  the following holds.
  For any infinite sequence of finite subsets
  \[e\in F_1\subset F_2\subset \ldots \subset F_k\subset \ldots
  \subset G\] satisfying (\ref{eq:huhuhu}) for $\eta=\kappa/(24/\epsilon)^{k-1}$,
  there are $\delta>0$, $N\geq 1$
   and a finite set $S\subset G$ such that, if
  $\phi:S\to \Sym(n)$ is an $(S,\delta,n)$-sofic approximation with $n\geq N$,
    there exist $C_1,\dotsc,C_k\subset \{1,\dotsc,n\}$ such
  that
  \begin{enumerate}
  \item\label{conc:eins} the sets $\phi(F_1) C_1, \dotsc, \phi(F_k) C_k$ are pairwise disjoint,
  \item\label{conc:zwei} for every $1\leq j\leq k$ and every $c\in C_j$, the map
    $s\mapsto \phi(s) c$ from $F_j$ to $\phi(F_j) c \subset \{1,\dotsc,n\}$ is injective,
  \item\label{conc:drei} the family $\{\phi(F_j) c\}_{1\leq j\leq k,\; c\in C_j}$ is $\epsilon$-disjoint and $(1-\epsilon)$-covers $\{1,\dotsc,n\}$,
  \item\label{conc:vier} $(1-\kappa) \lambda_j \leq
    |\phi(F_j) C_j|/n \leq (1 + \kappa) \lambda_j$ for
    every $j=1,2,\dotsc,k$.
    \end{enumerate}
\end{lemma}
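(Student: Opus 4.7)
The plan is to transfer the Ornstein--Weiss quasi-tiling theorem (\cite{MR910005}) from the group $G$ to the sofic approximation space $\{1,\dotsc,n\}$ via $\phi$. In the group-theoretic setting, Ornstein--Weiss tells us that, for any $\epsilon,\kappa>0$, one can choose $k$ and weights $\lambda_1,\dotsc,\lambda_k\in(0,1]$ with $\sum \lambda_j \in [1-\epsilon,1]$ so that any Følner sequence $F_1\subset\dotsb\subset F_k$ whose nesting is bounded as in (\ref{eq:huhuhu}) with $\eta=\kappa/(24/\epsilon)^{k-1}$ will quasi-tile any sufficiently $F_k$-invariant finite subset of $G$. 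The standard proof is a greedy selection: working from $j=k$ down to $j=1$, one picks a maximal $\epsilon$-disjoint packing of left translates of $F_j$ inside what remains after the earlier (larger) tiles have been placed. The precise shape of the constant $(24/\epsilon)^{k-1}$ is exactly what makes the boundary errors accumulated after $k$ greedy rounds stay within $\epsilon$, and the $\lambda_j$ appear as the densities attained by this procedure on a large $F_k$-invariant ball.

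To move the argument into $\{1,\dotsc,n\}$, I would choose $S\subset G$ to contain $F_k\cup F_k F_k^{-1}\cup\{e\}$ and take $\delta$ small enough compared with $\eta$, $\epsilon/|F_k|$, and $k$. Using (\ref{it:amarg}) of Definition \ref{def:sofic}, for any $s,t\in F_k$ with $st\in F_k$ we have $\phi(s)\phi(t)c=\phi(st)c$ outside a set of size $<\delta n$; using (\ref{it:omorg}), for any distinct $s,t\in F_k$ we have $\phi(s)c\ne\phi(t)c$ outside a set of size $<\delta n$. The union of all these exceptional sets has density at most $O(\delta|F_k|^2)$, so after discarding it one is left with a ``good'' subset $X\subset\{1,\dotsc,n\}$ of density at least $1-O(\delta|F_k|^2)$ on which the map $s\mapsto\phi(s)c$ is injective on each $F_j$ (yielding (\ref{conc:zwei})) and on which $\phi$ respects products just as an honest homomorphism would.

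The construction of the centers $C_j$ now mirrors the Ornstein--Weiss greedy procedure. Setting $D_k=X$, for $j=k,k-1,\dotsc,1$ choose a maximal subset $C_j\subset D_j$ such that the tiles $\{\phi(F_j)c:c\in C_j\}$ are $\epsilon$-disjoint and contained in $D_j$, and then set $D_{j-1}=D_j\setminus\phi(F_j)C_j$. Conclusion (\ref{conc:eins}) follows by construction, since tiles at stage $j$ are chosen inside what has been left untouched by stages $k,k-1,\dotsc,j+1$; conclusions (\ref{conc:drei}) and (\ref{conc:vier}) are proved by showing, by induction in decreasing $j$, that the sofic tiling produced in $\{1,\dotsc,n\}$ has essentially the same covering statistics as the Ornstein--Weiss tiling of a large $F_k$-invariant subset of $G$. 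The comparison uses the approximate-homomorphism property to identify the ``boundary'' of a sofic tile $\phi(F_j)c$ with the image under $\phi(\cdot)c$ of the group-theoretic boundary $(F_{j-1}^{-1}F_j)\setminus F_j$, whose relative size is controlled by (\ref{eq:huhuhu}); this is precisely what lets the greedy procedure close up in the sofic world with the same densities $\lambda_j$.

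The main obstacle is the propagation of approximation error through the $k$ greedy rounds: at each stage, replacing a group-theoretic disjointness/covering statement by a sofic one costs an additive error of order $\delta\cdot\mathrm{poly}(|F_k|)$, and matching the Ornstein--Weiss proportions $\lambda_j$ within a $(1\pm\kappa)$ factor forces $\delta$ (and $1/N$) to be taken extraordinarily small relative to $\eta,\kappa,\epsilon,|F_k|$ and $k$. Since $k$ and the Følner sequence are fixed before $\delta$ and $N$ are chosen, this compounding is not a logical problem; it is, however, the reason the quantitative dependence coming out of this proof is extremely weak, and also the reason one needs the strong nesting bound $\eta=\kappa/(24/\epsilon)^{k-1}$ rather than a more naïve $\eta$ depending only on $\epsilon$ and $\kappa$.
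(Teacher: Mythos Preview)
Your overall architecture --- greedy selection of tile centres from $j=k$ down to $j=1$, working inside a ``good'' subset of $\{1,\dotsc,n\}$ on which $\phi$ behaves like a homomorphism --- is the same as the paper's. But two points deserve correction.

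First, the detour through an Ornstein--Weiss tiling of a large $F_k$-invariant subset of $G$ is neither what the paper does nor needed. The paper never constructs a group-side tiling to compare against; the greedy procedure is carried out directly in $\{1,\dotsc,n\}$, and the $\lambda_j$ are fixed \emph{a priori} by the recursion $\lambda_k=\epsilon$, $\lambda_j=\epsilon(1-\sum_{i>j}\lambda_i)$, i.e.\ $\lambda_j=\epsilon(1-\epsilon)^{k-j}$, depending only on $\epsilon$ and $k$. This matters for the quantifier order in the statement: $k$ and the $\lambda_j$ must be chosen before the F\o lner sequence is.

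Second --- and this is the real gap --- taking a \emph{maximal} $\epsilon$-disjoint packing at each stage does not yield conclusion (\ref{conc:vier}). Maximality gives a lower bound on the fraction covered at stage $j$, but no upper bound: a maximal $\epsilon$-disjoint family could cover far more than $\lambda_j n$, and then the proportions at later stages would be off. The paper's device is different: it shows that $\{\phi(F_j)c\}_{c\in B_j}$ is a $\delta_j$-even covering, invokes \cite[Lemma~4.4]{MR3068400} to extract an $\epsilon$-disjoint subfamily that $\epsilon(1-\delta_j)$-covers, and then takes a \emph{minimal} such $C_j$. Minimality gives the two-sided bound
\[
\epsilon(1-\delta_j)\,n \ \le\ |\phi(F_j)C_j| \ \le\ \epsilon(1-\delta_j)\,n + |F_j|,
\]
which is exactly what pins $|\phi(F_j)C_j|/n$ to within $(1\pm\kappa_j)\lambda_j$. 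The careful recursive control of the errors $\kappa_j$ (and the specific choice $\eta=\kappa/(24/\epsilon)^{k-1}$) then forces $\kappa_1\le\kappa$. Since (\ref{conc:vier}) is precisely the conclusion the paper flags as new relative to \cite[Lemma~4.5]{MR3068400}, this is the step your sketch is missing.
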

This is essentially  \cite[Lemma 4.5]{MR3068400}; we have only added
conclusion (\ref{conc:vier}), which will be crucial to our purposes. It was
already pointed out in
\cite[Lemma 4.3]{MR3130315} that the method of proof in 
\cite[Lemma 4.5]{MR3068400} can give conclusions like this one,
but the version of conclusion (\ref{conc:vier}) given there
is unfortunately
not quite strong for our purposes.

We remark in passing that the values of $\lambda_1,\dotsc,\lambda_k$ given
by the proof below depend only on $\epsilon$, not on $\kappa$, though we will
not need this in what follows.

\begin{proof}[Proof of Lemma \ref{lem:buend}]
We will start with an $(S,\delta,n)$-sofic approximation $\phi$ and show how to construct
the sets $C_1,\dotsc,C_k$, in reverse order. It will become clear that the argument works
provided that $k$ is larger than a constant depending only on
$\epsilon$ (not on $\kappa$).
We set $S = F_{k}^{-1} F_k$.

As in \cite{MR3068400}, we say that a collection $\{A_i\}$ of subsets of a finite set $X$ is a {\em
$\rho-$even covering of $X$ of multiplicity $M$} if (i) no element of $X$ is contained 
in more than $M$ elements of $\{A_i\}$, (ii) $\sum_i |A_i| \geq
(1-\rho) M |X|$.
Since $S = F_k^{-1} F_k$,
Definition \ref{def:sofic} implies that there is a set $B\subset \{1,2,\dotsc,n\}$
with $|B|\geq (1- \delta') n$, $\delta' = O_{|F_k|}(\delta)$,
such that, for all $g,h \in F_k$,
\begin{enumerate}
\item\label{it:cars}
    $\phi(g)^{-1} \phi(h) x = \phi(g^{-1} h) x$ for all $x\in B$,
\item\label{it:cbrs}
  $\phi(g^{-1} h) x \ne x$ for all $x\in B$ unless $g = h$.\end{enumerate}
By (\ref{it:cars}) and (\ref{it:cbrs}),
the map $s\mapsto \phi(s) x$ is
injective on $F_k$ for $x\in B$.
It is easy to see that this implies that
the sets $\phi(F_k) x$ form a $\delta'$-even
covering of $\{1,\dotsc,n\}$ of multiplicity $|F_k|$.

By \cite[Lemma 4.4]{MR3068400},
every $\rho$-even covering of a set $X$ contains a $\epsilon$-disjoint subcollection that $\epsilon (1-\rho)$-covers $X$. In our case, this means that
there is a set $C\subset B$ such that the sets $\phi(F_k) c$, $c\in C$, are $\epsilon$-disjoint and
satisfy $|\cup_{c\in C} \phi(F_k) c| \geq \epsilon (1-\delta') n$. We take $C_k$ to be a minimal such set
$C$, and set $\lambda_k = \epsilon$. Clearly, 
\[(1-\delta') \lambda_k n \leq |\phi(F_k) C_k| \leq (1-\delta') \lambda_k n + |F_k|.\]
Since $n\geq N$ and
we can assume that $N$ is larger than any given function of $\epsilon$,
$\kappa$ and $|F_k|$,
we may assume that $|F_k|$ is less than $n$ times an arbitrarily
small constant that we may let depend on $\epsilon$ and $\kappa$. Since we
can also take $\delta$ 
to be smaller than an arbitrary quantity depending on
$\epsilon$, $\kappa$ and $|F_k|$, we can assume that $\delta'$
is smaller than any given quantity depending on $\epsilon$ and $\kappa$,
we conclude that
\[(1-\kappa_k) \lambda_k n \leq |\phi(F_k) C_k| \leq (1+\kappa_k) \lambda_k n,\]
where $\kappa_k>0$ is an arbitrarily small constant
that we are allowed to
let depend on $k$, $\epsilon$ and $\kappa$; we will set it later.
(Of course, since $k$ will be determined by $\epsilon$ and $\kappa$,
a dependence on $k$, $\epsilon$ and $\kappa$ is the same as a dependence
on just $\epsilon$ and $\kappa$.)

We can now set up an iteration. For $j = k,k-1,\dotsc,1$, we will construct
sets $C_j\subset B$ such that
  \begin{enumerate}
  \item $\phi(F_j) C_j$ is disjoint from
    $\phi(F_{j+1}) C_{j+1} \cup \dotsc \cup \phi(F_k) C_k$,
  \item for every $c\in C_j$, the map
    $s\mapsto \phi(s) c$ defined on $F_j$ is injective,
  \item the family $\{\phi(F_j) c\}_{c\in C_j}$ is $\epsilon$-disjoint,
  \item $(1-\kappa_j) \lambda_j \leq
    |\phi(F_j) C_j|/n \leq (1 + \kappa_j) \lambda_j$ for
    every $j=1,2,\dotsc,k$, where $\kappa_j$
    will depend only on $\epsilon$, $\kappa$ and $j$,
    and, moreover, $\kappa_j \geq \kappa_{j+1}
    \geq \dotsc \geq \kappa_k$.
    \end{enumerate}
  We have just constructed $C_k$; let $B_k = B$.
  We shall now construct $C_j$, $j<k$, assuming we have already constructed
  $C_{j+1},\dotsc,C_k$. We define
  \[B_j = \{s\in B: \phi(F_j) s \cap (\phi(F_{j+1}) C_{j+1} \cup \dotsc \cup
  \phi(F_k) C_k) = \emptyset\}.\]
  By (\ref{eq:huhuhu}) (with $\eta=1$) and the fact that $C_i \subset B$ for all $i>j$,
  \[\begin{aligned}
  |\phi(F_j)^{-1} &(\phi(F_{j+1}) C_{j+1} \cup \dotsb \cup \phi(F_k) C_k)|\\
  &= |\phi(F_j^{-1} F_{j+1}) C_{j+1} \cup \dotsb \cup \phi(F_j^{-1} F_k) C_k|\\
  &\leq \sum_{i=j+1}^k (|\phi(F_j^{-1} F_i \setminus F_i) C_i| + |\phi(F_i) C_i|)\\
  &\leq \sum_{i=j+1}^k (\eta |F_i| |C_i| + |\phi(F_i) C_i|) \leq
  \sum_{i=j+1}^k \left(1 + \frac{\eta}{1-\epsilon}\right) |\phi(F_i) C_i|\\
  &\leq \left(1 + \frac{\eta}{1-\epsilon}\right) (1 + \kappa_{j+1})
  \sum_{i=j+1}^k \lambda_i n.\end{aligned}\]
  Writing
  \[\sigma_r = \sum_{i=r}^k \lambda_i, \;\;\;\;\;
  \delta_j = \delta' + \left(1 + \frac{\eta}{1-\epsilon}\right) (1 +
  \kappa_{j+1}) \sigma_{j+1},\]
  we obtain that
  \[|B_j| \geq |B| - \left(1 + \frac{\eta}{1-\epsilon}\right) (1 + \kappa_{j+1})
  \sigma_{j+1} n \geq (1 - \delta_j) n.\]

  Hence, $\{\phi(F_j) c\}_{c\in B_j}$ is a $\delta_j$-even covering of
  $\{1,2,\dotsc,n\}$ with multiplicity $|F_j|$.
  Just as before, we apply \cite[Lemma 4.4]{MR3068400}, and obtain a set
  $C_j\subset B_j$ such that the sets $\phi(F_j) c$, $c\in C_j$ are
  $\epsilon$-disjoint and
  \[\epsilon ( 1 -\delta_j) n \leq |\phi(F_j) C_j| \leq
  \epsilon ( 1 -\delta_j) n + |F_j|.\]
  Since $C_j \subset B_j$,
  the set $\phi(F_j) C_j$ is disjoint from the sets
  $\phi(F_{j+1}) C_{j+1},\dotsc, \phi(F_k) C_k$; since $B_j\subset B$,
  the map $s\mapsto \phi(s) c$ defined on $F_j$ is injective for all
  $c\in B_j$, and thus for all $c\in C_j$.

  Note now that
  \[\epsilon (1 - \delta_j) n = \epsilon \left(1 - 
\delta' - \left(1 + \frac{\eta}{1-\epsilon}\right) (1 +
\kappa_{j+1}) \sigma_{j+1}\right) n \geq \epsilon 
(1 - \sigma_{j+1}) (1 - \kappa_j) n,\]
where we set
\begin{equation}\label{eq:dubcross}
  \kappa_j = \max\left(\frac{\delta' +\sigma_{j+1}\left[ \kappa_{j+1} \left(1 +
      \frac{\eta}{1-\epsilon}\right) +
      \frac{\eta}{1-\epsilon}\right]}{1-\sigma_{j+1}}
  ,\kappa_{j+1}\right),
\end{equation}
note also that
\[\epsilon (1 - \delta_j) n + |F_j| \leq \epsilon (1 - \sigma_{j+1}) n\]
provided that \[n\geq \frac{|F_j|}{\epsilon(\delta_j - \sigma_{j+1})},\]
as we may assume. We let
\begin{equation}\label{eq:terix} \lambda_j = \epsilon (1 - \sigma_{j+1})
\end{equation} and conclude that
\begin{equation}\label{eq:martha}(1-\kappa_j) \lambda_j \leq
\frac{|\phi(F_j) C_j|}{n} \leq \lambda_j \leq (1 + \kappa_j) \lambda_j,\end{equation}
as desired. We see that we have proved the four conditions that we stated
$C_j$ would satisfy.

We continue this iteration until we reach $j=1$. Conclusions (\ref{conc:eins})
and (\ref{conc:zwei}) in the statement of the Lemma are immediate, as is the
first half of conclusion (\ref{conc:drei}). We will have conclusion
(\ref{conc:vier}) directly from (\ref{eq:martha}) provided that
\begin{equation}\label{eq:cross}
  \kappa_1 \leq \kappa.\end{equation}
Then
\[\left|\cup_{j=1}^k \phi(F_j) C_j\right| = \sum_{j=1}^k |\phi(F_j) C_j|
\geq n\cdot \sum_{j=1}^k (1 - \kappa_j) \lambda_j \geq n\cdot
\sum_{j=1}^k (1 - \kappa) \lambda_j.\]
Thus, the second half of conclusion (\ref{conc:drei}) holds (i.e.,
$\{\phi(F_j) C\}_{1\leq j\leq k, c\in C_j}$ does
$(1-\epsilon)$-cover $\{1,\dotsc,n\}$) provided that
\begin{equation}\label{eq:dot}
  \lambda_1 + \dotsc + \lambda_k \geq 1 - \epsilon/2\end{equation}
  and that 
  $\kappa \leq \epsilon/2$ (as we may assume: if it is not the case,
 we reset $\kappa = \epsilon/2$ at the very beginning).
Now, by (\ref{eq:terix}),
we have $(1 - \sigma_j) = (1 - \epsilon) (1 -
\sigma_{j+1})$, and so (\ref{eq:dot}) holds provided that $(1-\epsilon)^k
\leq \epsilon/2$; for that, in turn, to be true, it is enough to set
$k$ to be at least a constant times $\epsilon^{-1} \log \epsilon^{-1}$.
We can, in fact, choose $k$ such that
\begin{equation}\label{eq:tututu}
  1  - \epsilon/2 \leq \sigma_1 =
\lambda_1 + \dotsc + \lambda_k \leq 1 - \epsilon/4,\end{equation}
since (\ref{eq:terix}) holds and since we may assume $\epsilon<1/2$.

It remains to verify (\ref{eq:cross}). By (\ref{eq:dubcross}),
for $1\leq j\leq k-1$,
\[\kappa_j \leq \frac{1}{\epsilon/4} \left(
\delta' + 2 \eta + 3 \kappa_{j+1}\right).\]
Recall that $\delta' = O_{|F_k|}(\delta)$; we can set $\delta$ small
enough that $\delta'\leq \eta = \kappa/(24/\epsilon)^{k-1}$.
Recall also that may set
$\kappa_k$ to a very small value depending on $k$, $\epsilon$ and $\kappa$;
we set $\kappa_k = \eta$, and thus obtain that
\[\begin{aligned}
\kappa_{k-1} &\leq \frac{1}{\epsilon/4} \cdot 6 \eta =
\frac{\kappa}{(4/\epsilon)^{k-2}}\\
  \kappa_{k-2} &\leq  \frac{1}{\epsilon/4}
  (\delta' + 2 \eta + 3 \kappa_{k-1}) \leq
  \frac{1}{\epsilon/4} \cdot \frac{6 \kappa}{(4/\epsilon)^{k-2}} =
  \frac{\kappa}{(4/\epsilon)^{k-3}},\\
  \vdots\\
  \kappa_1 &\leq \kappa,\end{aligned}\]
where we assume, as we may, that $\epsilon\leq 1/4$.
Thus (\ref{eq:cross}) holds, and we are done.
\end{proof}

The fact that sofic approximations of an amenable group are (asymptotically)
conjugate is an easy corollary of 
Lemma \ref{lem:buend}, as the following shows. We thank D. Kerr for pointing
us in this direction.

\begin{proposition}\label{prop:labil}
  Let $G$ be an amenable group. Then, for any $\epsilon>0$ and any finite
  $S\subset G$,  there are a subset $S'\subset G$ with
  $S'\supset S$ and constants $N\in \mathbb{Z}^+$, $\delta>0$ such that,
  if $\phi_1$ and $\phi_2$ are $(S',\delta,n)$-sofic approximations of $G$
  with $n\geq N$, then there is a bijection $\tau:\{1,2,\dotsc,n\}\to
  \{1,2,\dotsc,n\}$ such that, for every $s\in S$,
  \[(\tau \circ \phi_1(s) \circ \tau^{-1})(y) = (\phi_2(s))(y)\]
  for $\geq (1-\epsilon) n$ values of $x\in \{1,2,\dotsc,n\}$.
\end{proposition}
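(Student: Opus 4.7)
The plan is to apply Lemma \ref{lem:buend} simultaneously to both sofic approximations $\phi_1$ and $\phi_2$ and to build $\tau$ tile-by-tile. Fix $\epsilon>0$ and $S\subset G$. Pick a F\o lner sequence $e\in F_1\subset F_2\subset\cdots$ with $S\subset F_1$, choose $k$ and $\lambda_1,\dotsc,\lambda_k$ as in Lemma \ref{lem:buend} (with parameters $\epsilon/100$ and $\kappa=\epsilon/100$, say), and pass to a subsequence so that (\ref{eq:huhuhu}) holds with $\eta$ as required and, in addition, $|sF_j\triangle F_j|\leq \eta|F_j|$ for every $s\in S$ (this is possible since $S\subset F_1\subset F_j$ for all $j$). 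Take $S'=F_k^{-1}F_k$ and let $\delta,N$ be small/large enough that both $\phi_1$ and $\phi_2$ satisfy the conclusions of Lemma \ref{lem:buend}.

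Applying the lemma produces, for $i=1,2$, sets $C_j^{(i)}\subset\{1,\dotsc,n\}$ with the map $s\mapsto\phi_i(s)c$ injective on $F_j$ for each $c\in C_j^{(i)}$, the families $\{\phi_i(F_j)c\}_{c\in C_j^{(i)},\,1\leq j\leq k}$ $\epsilon$-disjoint and $(1-\epsilon)$-covering, and $|\phi_i(F_j)C_j^{(i)}|/n$ within $\kappa\lambda_j$ of $\lambda_j$. Combined with $\epsilon$-disjointness this yields $\big||C_j^{(1)}|-|C_j^{(2)}|\big|\leq (\text{small})\cdot n/|F_j|$. The next step is to match tiles: for each $j$, discard a small number of centers from the larger of $C_j^{(1)},C_j^{(2)}$ so the two sets have the same size, and pick an arbitrary bijection $\sigma_j:C_j^{(1)}\to C_j^{(2)}$ between the retained centers.

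Now define $\tau$. On each retained tile $\phi_1(F_j)c$ ($c\in C_j^{(1)}$), using $\epsilon$-disjointness pick a subset $T_{j,c}\subset F_j$ with $|T_{j,c}|\geq(1-\epsilon)|F_j|$ so that the points $\phi_1(t)c$ ($t\in T_{j,c}$) are distinct from one another and from all points in every other retained tile $\phi_1(F_{j'})c'$; do the same for $\phi_2$, pruning to a common subset $T_{j,c}$ that works for both sides (again losing only an $\epsilon$-fraction of each $F_j$). Declare
\[
\tau(\phi_1(t)c)=\phi_2(t)\sigma_j(c)\qquad(t\in T_{j,c}).
\]
This is a well-defined injection on a subset of $\{1,\dotsc,n\}$ of size $\geq(1-O(\epsilon))n$, whose image has the same size; extend $\tau$ arbitrarily to a bijection on the remainder.

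Finally, verify the conjugation property. Fix $s\in S$ and let $y=\phi_1(t)c$ with $c\in C_j^{(1)}$ and $t\in T_{j,c}$. By the approximate homomorphism property, outside a set of size $O_{|F_k|}(\delta)n$ we have $\phi_1(s)\phi_1(t)c=\phi_1(st)c$; by the F\o lner condition on $F_j$ for elements of $S$, for all but $\eta|F_j|$ choices of $t\in F_j$ we also have $st\in F_j$, and, after shrinking $T_{j,c}$ slightly further, we may assume $st\in T_{j,c}$. For such $y$,
\[
\tau(\phi_1(s)y)=\tau(\phi_1(st)c)=\phi_2(st)\sigma_j(c),
\]
while $\phi_2(s)\tau(y)=\phi_2(s)\phi_2(t)\sigma_j(c)=\phi_2(st)\sigma_j(c)$ outside another set of size $O_{|F_k|}(\delta)n$, by the approximate homomorphism for $\phi_2$. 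Summing the excluded sets over all tiles gives a total exceptional set of size $\leq\epsilon n$, provided $\delta$ was chosen small enough and $N$ large enough, which is what is claimed.

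The main obstacle is bookkeeping: one must keep track of four different "loss" contributions (the $\epsilon$-disjointness of the $\phi_i$-tilings, the non-covered part, the approximate-homomorphism defect for $\phi_i(s)\phi_i(t)=\phi_i(st)$, and the F\o lner defect $st\notin F_j$) and arrange the parameters so that their sum is at most $\epsilon n$. The F\o lner step is the conceptual heart: without the F\o lner property of the $F_j$, multiplication by $s\in S$ would move a positive fraction of points out of each tile, and the tile-by-tile definition of $\tau$ would not conjugate $\phi_1(s)$ to $\phi_2(s)$.
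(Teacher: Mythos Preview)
Your proposal is essentially the paper's own proof: apply Lemma~\ref{lem:buend} to both $\phi_1$ and $\phi_2$, match the tile centers using conclusion~(\ref{conc:vier}), define $\tau$ by $\phi_1(t)c\mapsto\phi_2(t)\sigma_j(c)$ on the (pruned) tiles, and verify the conjugation identity via the F\o lner property together with the approximate-homomorphism condition. The only slip is that with $S'=F_k^{-1}F_k$ and $s\in S\subset F_k$, $t\in F_k$, the product $st$ need not lie in $S'$, so either enlarge $S'$ to include $SF_k$ (as the paper does) or invoke $\phi_i(s)\phi_i(t)=\phi_i(st)$ only \emph{after} you have restricted to those $t$ with $st\in F_j$.
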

\begin{remark}
  It is not actually necessary to require that $n\geq N$. The
  statement is both true and relatively straightforward for $n<N$:
  we can set $\delta\leq 1/N$, and then
  there are no $(S',\delta,n)$-sofic representations with
$n\leq N$, provided that $S'$ is large enough (why?); if $G$ is finite
(so that we could be kept from choosing $S'$ large enough), we set
  $S'=G$, thus making $\phi$ and $\phi'$ into injective homomorphisms from
  $G$ to $\Sym(n)$ whose
images are regular permutation subgroups. It is easy to show
then that any two such homomorphisms must be conjugate.
\end{remark}
\begin{proof}
  We will use Lemma \ref{lem:buend}.
  Its application requires a consecutive choice of $\epsilon$, $\kappa$ and
  a F\o lner sequence $\{F_{j}\}_{j\geq 1}$.
  We choose $\epsilon$ and $\kappa$ both equal to our $\epsilon/7$.
  Lemma \ref{lem:buend} then provides us with some values of
  $k\geq 1$, $\lambda_1,\dotsc,\lambda_k \in (0,1\rbrack$. Since
  $G$ is amenable, there exists a F\o lner sequence 
  $\{F_{j}\}_{j\geq 1}$ such that 
  $|F_j \setminus (F_j \cap s^{-1} F_j)| \leq (\epsilon/7) |F_j|$
  for all $s\in S$, $j\geq 1$,
  and such that, moreover, (\ref{eq:huhuhu}) holds with
  $\eta=\kappa/(24/\epsilon)^{k-1}$.
  Then Lemma \ref{lem:buend} gives us
  $\delta_0>0$ (called $\delta$ in the statement of
  Lemma \ref{lem:buend})
  and $N\geq 1$, as well as a finite set $S_0\subset G$
  (called $S$ in the statement of the Lemma) such that
  conclusions (\ref{conc:eins})--(\ref{conc:vier}) of Lemma \ref{lem:buend}
  hold for both $\phi_1$ and $\phi_2$ -- with
  respect to some collection
  $C_{1,1},\dotsc,C_{1,k}$ of subsets of $\{1,2,\dotsc,n\}$, in the case of
  $\phi_1$, and with respect to another collection $C_{2,1},\dotsc,C_{2,k}$ 
  of subsets of $\{1,2,\dotsc,n\}$, in the case of $\phi_2$.
  Set $S' = S_0 \cup S \cup S F_k \cup F_k^{-1}$ and
  $\delta = \min(\delta_0, 3 \epsilon^2/49 |F_k|)$.

By conclusion \eqref{conc:drei} of Lemma \ref{lem:buend},
the families $\{\phi_1(F_j) c\}_{1\leq j\leq k,\; c\in C_{i,j}}$,
  $i=1,2$, are $(\epsilon/7)$-disjoint.
We can therefore choose $F_{j,c}\subset F_j$ such that
$|F_{j,c}|\geq (1-2 \epsilon/7)|F_j|$ and both of the families
$\{\phi_1(F_{j,c}) c\}_{1\leq j\leq k,\; c\in C_{1,j}}$ and
$\{\phi_2(F_{j,c}) c \}_{1\leq j\leq k,\; c\in C_{2,j}}$ are families of disjoint
subsets. Moreover, by conclusion \eqref{conc:zwei}, for $i=1,2$,
$1\leq j\leq k$ and $c\in C_{i,j}$, the map $s\mapsto \phi_i(s) c$ from
$F_{j,c}$ to $\phi_i(F_{j,c})$ is injective.

We choose subsets $C_{i,j}'\subset C_{i,j}$, for $i=1,2$, $1\leq j\leq k$, such that
\[|C_{i,j}'| = \min_{i=1,2} |C_{i,j}|.\] Then, for $i=1,2$ and $1\leq j\leq k$,
\[\begin{aligned}&\left|\cup_{c\in C_{i,j}'} \phi_i(F_{j,c}) c\right| =
\sum_{c\in C_{i,j}'} |F_{j,c}| \geq
\left(1 - \frac{2 \epsilon}{7}\right) |F_j|\cdot
\min_{i=1,2} |C_{i,j}|\\
&\geq \left(1 - \frac{2 \epsilon}{7}\right) \min_{i=1,2} |\phi(F_j) C_{i,j}|
\geq \left(1 - \frac{2 \epsilon}{7}\right) \left(1 - \frac{\epsilon}{7}\right)
\lambda_j n \geq \left(1-\frac{3 \epsilon}{7}\right)
\lambda_j n,\end{aligned},\]
where we use conclusion \eqref{conc:vier} of \ref{lem:buend}.
Since $\lambda_1+ \dotsc + \lambda_k \geq 1 - \epsilon/7$, this implies,
by conclusion \eqref{conc:eins}, that, for $i=1,2$, the set
\[\Lambda_i = 
\bigcup_{1\leq j\leq k} \bigcup_{c\in C_{i,j}'} \phi_i(F_{j,c}) c\]
satisfies
\[\begin{aligned}|\Lambda_i| &=
\sum_{1\leq j\leq k} \left|\cup_{c\in C_{i,j}'} \phi_i(F_{j,c}) c\right|
= \sum_{1\leq j\leq k} \sum_{c\in C_{i,j}'} |F_{j,c}|\\
&\geq
\left(1-\frac{3 \epsilon}{7}\right) \left(\sum_{j=1}^k \lambda_j\right) n
\geq \left(1 - \frac{4 \epsilon}{7}\right) n.\end{aligned}\]
Since $|C_{1,j}'| = |C_{2,j}'|$ for all $1\leq j\leq k$,
we also see that $|\Lambda_1| = |\Lambda_2|$.

Choose a
bijection $\rho_j:C_{1,j}' \to C_{2,j}'$ for each $1\leq j\leq k$.
For every $x \in \Lambda_1$.
there are uniquely determined elements $1\leq j\leq k$,
$c\in C_{1,j}'$, $g\in F_{j,c}$, such that $x = \phi_1(g) c$.
We define $\tau(x) = \phi_2(g) \rho_j(c) \in \Lambda_2$.
We complete the definition of $\tau:\{1,\dotsc,n\}\to \{1,\dotsc,n\}$
by letting its restriction to $\{1,\dotsc,n\}\setminus \Lambda_1$
be an arbitrary bijection to $\{1,\dotsc,n\}\setminus \Lambda_2$.

Now let $s\in S$. For each $1\leq j\leq k$ and each $c\in C_{1,j}'$,
\[\begin{aligned}
|F_{j,c} \cap s^{-1} F_{j,c}| &\geq |F_{j,c}| - |F_j\setminus (F_j\cap
s^{-1} F_{j,c})| \\ &\geq 
|F_{j,c}| - (|F_j\setminus (F_j\cap s^{-1} F_j)| +
|s^{-1} F_j \setminus s^{-1} F_{j,c}|)\\
&\geq \left(1-\frac{2\epsilon}{7}\right)
|F_j| - \left(\frac{\epsilon}{7} |F_j| + \frac{2 \epsilon}{7} |F_j|\right)
= \left(1 - \frac{5 \epsilon}{7}\right) |F_j|,
\end{aligned}\]
where we use the assumption that $|F_j\setminus (F_j\cap s^{-1} F_j)| \leq
(\epsilon/7) |F_j|$. Hence
\[\begin{aligned}
\left|\cup_{c\in C_{1,j}'} \phi_1(F_{j,c}\cap s^{-1} F_{j,c}) c\right| &=
\sum_{c\in C_{1,j}'} |F_{j,c}\cap s^{-1} F_{j,c}|\geq
\left(1 - \frac{5 \epsilon}{7}\right) |F_j| \min_{i=1,2} |C_{i,j}|\\
&\geq \left(1 - \frac{5 \epsilon}{7}\right) \left(1 - \frac{\epsilon}{7}\right)
\lambda_j n = \left(1 - \frac{6 \epsilon}{7}\right)
\lambda_j n.\end{aligned}\]
Since $\lambda_1+\dotsc+\lambda_k \geq 1 - \epsilon/7$, this implies that
the set
\[\Lambda_{1,s} = \bigcup_{1\leq j\leq k} 
\bigcup_{c\in C_{1,j}'} \phi_i(F_{j,c}\cap s^{-1} F_{j,c}) c\]
has at least $(1-\epsilon+6 \epsilon^2/49) n$ elements.

Since $\phi_1$ and $\phi_2$ are $(S',\delta,n)$-sofic approximations,
we see that, for $i=1,2$, the set
\[R_i = \{x\in \{1,\dotsc,n\}: \phi(s) x = \phi(s g) \phi(g)^{-1} x\;\;\;\;\;\; \forall
g\in F_k\}\]
has at least $(1 - |F_k| \delta) n \geq (1 - 3 \epsilon^2/49) n$
elements. Thus, the set
\[\Lambda = \tau(\Lambda_{1,s})\cap \tau(R_1) \cap R_2\]
has at least $(1 - \epsilon + 6 \epsilon^2/49 - 2\cdot 3\epsilon^2/49) n
= (1-\epsilon) n$ elements.

Let $y\in \Lambda$. Then $\tau^{-1}(y) \in \Lambda_{1,s}$. In consequence,
there are uniquely determined elements $1\leq j\leq k$,
$c\in C_{1,j}'$, $g\in F_{j,c}\cap s^{-1} F_{j,c}$, such that
$\tau^{-1}(y) = \phi_1(g) c$; moreover, $y = \phi_2(g) \rho_j(c)$.
Since $\tau^{-1} y \in R_1$, we know that
\[(\phi_1(s))(\tau^{-1}(y)) = (\phi_1(s g) \phi_1(g)^{-1})(\phi_1(g) c) = \phi_1(s g) c.\]
Similarly, since $y\in R_2$, we know that
\[(\phi_2(s))(y) = \phi_2(s g) \rho_j(c).\]
Since $g\in s^{-1} F_{j,c}$, we know that $s g \in F_{j,c}$, and so
\[\tau(\phi_1(s g) c) = \phi_2(s g) \rho_j(c).\]
In other words,
\[(\tau \circ \phi_1 \circ \tau^{-1})(y) = (\phi_2(s))(y)\]
for all $y\in \Lambda$, as was desired.
\end{proof}

\begin{center}
  * * *
\end{center}

This is a good point at which to emphasize the relation with the work
of Arzhantseva and P\u{a}unescu \cite{MR3350728}, who introduced the
concept of {\em weakly stable groups}. Translated into the language used here,
their definition reads as follows: let $G$ be a finitely generated
group and $A$ a finite set of generators of $G$. The group $G$ is
said to be {\em weakly stable} if, for every $\epsilon>0$, there
are a $\delta>0$ and a finite subset $S'\subset G$ with $A\subset S'$
such that, for every $n\geq 1$
and every $(S',\delta,n)$-sofic approximation $\phi:S'\to \Sym(n)$
of $G$, there is a homomorphism $\phi':G\to \Sym(n)$ such that
$d_h(\phi(g),\phi'(g))<\epsilon$ for all $g\in A$.
(Actually, \cite{MR3350728} requires $S'$ to be the ball
$B(r) = \{g_1 \dotsc g_k : g_i\in A\cup A^{-1}, k\leq r\}$
for $r = 1/\delta$,
but it is easy to see that the definition thus obtained is
equivalent to the one given here.)

Theorem 1.1 of \cite{MR3350728} states that a finitely generated amenable
group $G$ is weakly stable if and only if it is residually finite.
Let us see how to prove this using Prop.~\ref{prop:labil}.
(We make no claim to originality here; we are simply showing
how to do matters in elementary language, without using ultraproducts.
In particular,
the procedure in \cite[\S 6]{MR3350728} is very close to what we will do.)
Let $G$ be finitely generated and amenable.
It is easy to show that weak stability implies residual finiteness. Let us prove the converse. Assume $G$ is residually finite.
Let $\epsilon>0$. Proposition \ref{prop:labil}
(with $S=A$) gives us a finite subset $S'\subset G$ with $A\subset S'$
and constants $N\in \mathbb{Z}^+$, $\delta>0$. 
Since $G$ is
residually finite, there exists a homomorphism $\phi_0:G\to H$,
$H$ finite,
such that $\phi_0(g)\ne e$ for every $g\in S'$ with $g\ne e$. We
compose $\phi_0$ with the map $\rho:H\to \Sym(H) \sim \Sym(n_0)$, $n_0=|H|$,
defined by the action of $H$ on itself by left multiplication, and obtain
a homomorphism $\rho\circ \phi_0 :G\to \Sym(n_0)$ such that, for every
$g\in S'$ with $g\ne e$, $(\rho\circ \phi_0)(g)$ has no fixed points.
Set $\delta' = \min(\epsilon,\delta/n_0,1/N)$.
Now let an $(S',\delta',n)$-sofic approximation
$\phi:S'\to \Sym(n)$ be given. If $n< \max(N,n_0/\delta)\leq 1/\delta'$, then
$\phi$ is actually a homomorphism, and we are done. Assume
that $n\geq \max(N,n_0/\delta)$. Let $\phi_1:S'\to \Sym(n)$
be the composition of the direct product
$(\rho\circ \phi_0)^\ell:S'\to \Sym(\ell n_0)$
($\ell$ copies of $\rho\circ \phi_0$, $\ell = \lfloor n/n_0\rfloor$)
with the embedding $\Sym(\ell n_0) \to \Sym(n)$ induced by the
inclusion $\{1,2,\dotsc,\ell n_0\} \to \{1,2,\dotsc,n\}$.
Then $\phi_1$ is a homomorphism such that $\phi_1(g)$ has
$< n_0 \leq \delta n$ fixed points for every $g\in S'$, $g\ne e$;
in particular, it is an $(S',\delta,n)$-sofic approximation.
Since $\phi$ is also an $(S',\delta,n)$-sofic approximation,
we may apply Prop.~\ref{prop:labil}, and obtain a bijection
$\tau:\{1,2,\dotsc,n\}\to \{1,2,\dotsc,n\}$ such that, for
$\phi':G\to \Sym(n)$ defined by $\phi'(g) = \tau \circ \phi_1(g) \circ
\tau^{-1}$ for $g\in G$,
\[d_h(\phi(g),\phi'(g)) < \epsilon\]
for all $g\in A$, $g\ne e$, and
$d_h(\phi(e), \phi'(e)) =
d_h(\phi(e),\id) < \delta' \leq \epsilon$.
Since $\phi'$ is a homomorphism, we have proved that $G$ is weakly stable.


\section{Baumslag-Solitar groups and the Higman group}\label{sec:lindur}

The {\em Baumslag-Solitar group} $\BS(1,m)$ is defined by
 \begin{equation}\label{eq:harmo}
\BS(1,m)=\langle a_1,a_2: a_1^{-1} a_2 a_1=a_2^m\rangle,\end{equation}
where $m\geq 2$.
It is clear why we care about $\BS(1,m)$: the elements $a_1$, $a_2$ of the Higman group
generate a group isomorphic to $\BS(1,2)$. 

 Fix $M\geq 1$. Then
 the sets 
\begin{equation}\label{eq:prico}
F_n= \{a_1^i a_2^j:\; 0\leq i< 2 n,\; 0\leq j< 2 M m^{2 n}\},\;\;\;
\;\;\;\; n\geq 1,\end{equation}
 satisfy $$|s F_n\Delta F_n|\leq |F_n|/n \text{ for all } s \in \{a_1,a_2\},$$
 and so $\BS(1,m)$ is amenable.

\begin{proof}[Proof of Theorem \ref{thm:jut}]
  Assume the group $H_{4,m}$ is sofic. We know that, for every sofic group
  $H$, every semidirect product of the form $F\ltimes H$, $F$ finite, is
  sofic. (This is true more generally for $F$ amenable \cite{MR2220572}.)
  Hence, the following semidirect product is sofic:
  \[G = (\mathbb{Z}/4 \mathbb{Z}) \ltimes H_{4,m},\]
  where we define the action of $\mathbb{Z}/4 \mathbb{Z}$ as follows,
  in terms of a generator $t$ of $\mathbb{Z}/4 \mathbb{Z}$:
  \[\begin{aligned}
  t a_1 t^{-1} = a_2,\;\;\;\;\;\;
  t a_2 t^{-1} = a_3,\;\;\;\;\;\;
  t a_3 t^{-1} = a_4,\;\;\;\;\;\;
  t a_4 t^{-1} = a_1.\end{aligned}\]

 Set $S'\subset G$ finite, $\delta>0$, we will specify them later.
  Since $G$ is sofic, there
  is an $(S',\delta/2,k)$-sofic approximation $\alpha$ of $G$ for some 
$k\geq 1$.
  Then, for any $r\geq 1$, the direct product of $r$ copies of $\alpha$
  is an $(S',\delta/2,r k)$-sofic approximation of $G$.
Assume from now on that $n\geq 2 k/\delta$. Let
$r = \lfloor n/k\rfloor$. Then
\begin{equation}\label{eq:hudd}
  n\geq r k > n - k \geq \left(1 - \frac{\delta}{2}\right) n.
  \end{equation}
  We embed $\Sym(r k)$ in $\Sym(n)$, and obtain
  an $(S',\delta,n)$-sofic approximation $\phi$ of $G$.
  
  Clearly, $\phi$ restricts to an $(S_0',\delta,n)$-sofic approximation
  $\phi|_{\BS(1,2)}$ of $\BS(1,m) = \langle a_1,a_2 | a_1^{-1} a_2 a_1 = a_2^m
  \rangle < G$, where $S_0' = S'\cap \BS(1,m)$.
  Since $\BS(1,m)$ is amenable,
  we will be able to use Prop.~\ref{prop:labil} (``any two sofic
  approximations of
  an amenable group are conjugate''). Now, $\BS(1,m)$ has an
  $(S_0',\delta,n)$-sofic
  approximation $\psi$ that is easily described: identifying
  the set $\{1,\dotsc,n\}$ with $\mathbb{Z}/n \mathbb{Z}$, we define
  \[\begin{aligned}
  \psi(a_2) &= (x\mapsto x-1 \mo n)\\
  \psi(a_1) &= (x\mapsto m^{-1} x \mo n).\end{aligned}\]
  (We recall $n$ is coprime to $m$.) 
  This defines a homomorphism from $\BS(1,m)$ to $\mathbb{Z}/n \mathbb{Z}$,
  i.e., condition~(\ref{it:amarg}) in Definition \ref{def:sofic} holds for
  $\delta$, $S_0'$ completely arbitrary (that is, \[d_h(\phi(g) \phi(h),\phi(gh))=0\]
  for all $g,h\in \BS(1,m)$). It remains to check rule (\ref{it:omorg})
  in Def.~\ref{def:sofic}; let us do this.

  Given a non-trivial reduced word
  $w = a_{i_1}^{r_1} a_{i_2}^{r_2} \dotsb a_{i_k}^{r_k}$, $i_j=1,2$, the map
  $\psi(w)$ is a linear polynomial map $x\mapsto P(x) \mo n$ from 
  $\mathbb{Z}/n \mathbb{Z}$ to itself, where $P(x) = m^a x + b$,
  $a,b\in \mathbb{Z}$, $a$ and $b$ depending only on $w$ and
  not both equal to $0$. An element $x\in \mathbb{Z}/n\mathbb{Z}$ is a
  fixed point of $x\mapsto P(x) \mo n$ if and only if $(m^a-1) x = - b
  \mo n$.
  Thus, $\psi(w)$ has at most $m^a-1 = O_w(1)$ fixed points, unless
  $m^a\equiv 1 \mo n$ and $b\equiv 0 \mo n$. Clearly, if $a\ne 0$
  and $m^a\equiv 1 \mo n$, then $m^{|a|}\geq n+1$; if $b\ne 0$
  and $b\equiv 0\mo n$, then $|b|\geq n$. At the same time,
  $|a|\leq \ell$ and $|b|\leq m^\ell$, where $\ell = \sum_j |r_j|$.
It is also clear that $a=0$, $b=0$ only when $w$ equals the identity in
$\BS(1,m)$.
 Hence,
  for $S'$ given, $\psi$ is an $(S_0',\delta,n)$-sofic approximation
  provided that $n$ is larger than a constant depending only on
  $S_0'$ and $\delta$, something that we can assume.

  We can hence apply Proposition \ref{prop:labil}, and obtain a bijection
  $\tau:\{1,2,\dotsc,n\} \to \mathbb{Z}/n \mathbb{Z}$ such that
  \[(\tau \circ \phi(s) \circ \tau^{-1})(x) = (\psi(s))(x)\]
  for all $s\in S_0$ and $\geq (1-\epsilon) n$ values of
  $x\in \mathbb{Z}/n \mathbb{Z}$, where $S_0\subset \BS(1,m)$ and
  $\epsilon>0$ are arbitrary (and $S_0'$ and $\delta>0$
  are set in terms of $S_0$ and $\epsilon$).

  What remains is routine. Let $S_0 = \{a_1, a_2\}$. Let 
  \[S' = S'_0 \cup \{e, a_1, a_2, a_2^{-1}, t a_1, t, t^{-1}, t^{-2}, t^{-3}\}.\]
  (That is, we may add those elements to $S'$ if needed.)
 Since $t a_1 t^{-1} a_2^{-1} = e$
  in $G$, we see that
  \[(\phi(t) \phi(a_1) \phi(t^{-1}))(x) = \phi(a_2)(x)\]
  for $\geq (1-O(\epsilon)) n$ values of $x\in \mathbb{Z}/n\mathbb{Z}$,
  and so
  \[\begin{aligned}
  (\tau \phi(t^{-1}) \tau^{-1})^{-1}
  &\psi(a_1) (\tau \phi(t^{-1}) \tau^{-1}) (x) \\ &=
  (\tau \phi(t^{-1})^{-1} \tau^{-1})
  (\tau \phi(a_1) \tau^{-1}) (\tau \phi(t^{-1}) \tau^{-1}) (x) \\&=
  (\tau (\phi(t) \phi(a_1) \phi(t)^{-1}) \tau^{-1})(x) =
  (\tau \phi(a_2) \tau^{-1})(x) = (\psi(a_2))(x)\end{aligned}\]
  for $\geq (1-O(\epsilon)) n$ values of $x\in \mathbb{Z}/n\mathbb{Z}$.
  Defining $g:\mathbb{Z}/n\mathbb{Z}\to \mathbb{Z}/n \mathbb{Z}$
  by $g = \tau \phi(t^{-1}) \tau^{-1}$, we see that this means that
  \[g^{-1}(m^{-1} g(x)) = x-1\]
  for $\geq (1-O(\epsilon)) n$ values of $x$, and so
  \[g(y+1) = m g(y)\]
  for $\geq (1-O(\epsilon)) n$ values of $y = x-1 \in
  \mathbb{Z}/n\mathbb{Z}$.

  At the same time, because $t^{-4} = e$ in $G$, we know that
  \[x = \phi(e) x = \phi(t^{-4})(x) = (\phi(t^{-1}))^4(x)\]
  for $\geq (1-O(\epsilon)) n$ values of $x\in \mathbb{Z}/n \mathbb{Z}$,
  and so
  \[g^4(y) = (\tau \phi(t^{-1}) \tau^{-1})^4(y) = y\]
  for $\geq (1-O(\epsilon)) n$ values of $y\in \mathbb{Z}/n \mathbb{Z}$.
  Let $V$ be the set of all such values of $y$; clearly,
  $V' = V \cap g^{-1} V \cap g^{-2} V \cap g^{-3} V$ has size $|V'|\geq
  (1-O(\epsilon)) n$.

Let $f(y) = g(y)$ for $y\in V'$ and $f(y) = y$ for $y\notin V'$.
  Then
  \[f^4(y) = y\]
  for all $y\in \mathbb{Z}/n\mathbb{Z}$, and
  \[f(y+1) =  m f (y)\]
  for $\geq (1-O(\epsilon)) n$ values of $y\in \mathbb{Z}/n \mathbb{Z}$,
  as desired.
\end{proof}

As we mentioned in the introduction, and as should be clear from above,
the same proof works if one or more relations are added to the relations
\begin{equation}\label{eq:jo1}\begin{aligned}
t^4 = e,\;\;\;\;\;
  t a_1 t^{-1} = a_2,\;\;
  t a_2 t^{-1} = a_3,\;\;
  t a_3 t^{-1} = a_4,\;\;
  t a_4 t^{-1} = a_1,\;\; a_1^{-1} a_2 a_1 = a_2^m
\end{aligned}\end{equation}
defining
$(\mathbb{Z}/4\mathbb{Z}) \ltimes H_{4,m}$. 
Assume that $\BS(1,m)$ embeds (by means of the map taking $a_1$ to $a_1$
and $a_2$ to $a_2$) in the quotient of 
$(\mathbb{Z}/4\mathbb{Z}) \ltimes H_{4,m}$ obtained in this way.
(Just to give an example --
 a quick check via GAP suggests that this is the case when the relation
being added is $(a_1 a_3)^3 = e$.)
We can then go through the proof above, and obtain a result similar to
Theorem \ref{thm:jut}.


For instance: assuming that $\BS(1,m)$ does embed in the group $G$
given by the relation $(a_1 a_3)^3 = e$ and the relations in (\ref{eq:jo1}), 
we obtain that, if $G$ is sofic,
then, for
 any $\epsilon>0$, there is an $N>0$ such that, for every $n\geq N$, there
is a bijection $f:\mathbb{Z}/n\mathbb{Z}\to
\mathbb{Z}/n\mathbb{Z}$ for which, for $g(x) = f^2(f^{-2}(x)+1)$,
\[f(x+1) = m f(x),\;\;\;\;\;\; g(g(g(x)+1)+1)+1 = x\]
for at least $(1-\epsilon) n$ elements $x$ of
$\mathbb{Z}/n\mathbb{Z}$, and, moreover,
\[f(f(f(f(x))))=x\]
for all $x\in \mathbb{Z}/n\mathbb{Z}$. Notice that $g(x)$ behaves
$(m\cdot)$-locally like a constant times $x^m$, meaning that, for at least
$(1-O(\epsilon)) n$ elements $x$ of $\mathbb{Z}/n\mathbb{Z}$,
\[g(m x) = m^m g(x).\]
This can be easily seen as follows: if $f(y+1)=m y$ is valid
for $y=f^{-1}(x)$, $y=f^{-1}(f^{-1}(x)+1)$, $y = f^{-2}(x)$ and
$y = m f^{-1}+k$, $0\leq k \leq m-1$, then
\begin{equation}\label{eq:praktisch}\begin{aligned}
g(m x) &= f^2(f^{-2}(m x) + 1) = f^2(f^{-1}(f^{-1}(x)+1)+1) =
f(m (f^{-1}(x)+1))\\
&= f(m f^{-1}(x) + m) = m f(m f^{-1}(x) + m - 1) = \dotsc = m^m f(m f^{-1}(x))
\\ &= m^m f(f(f^{-2}(x)+1)) = m^m g(x).
\end{aligned}\end{equation}
Of course, we obtain the same conclusion, without the condition $f(f(f(f(x))))=x$, if we remove the relation $t^4=1$; we did not use the condition
$f(f(f(f(x))))=x$ in any step of (\ref{eq:praktisch}).

\section{Few cycles of length $3$}\label{sec:fecy}

The following statement resembles Theorem \ref{thm:mathov}, but is neither
weaker nor stronger. It will follow readily from the fact that
the group $H_3$ is trivial. It should be clear that the proof would
work just as well for analogous statements corresponding to any other
finite presentation of the trivial group.

\begin{lemma}\label{lem:higo3}
  There is a $\delta>0$ such that, for every coprime integers $n,m>1$ 
  and every bijection $f:\mathbb{Z}/n\mathbb{Z}\to \mathbb{Z}/n\mathbb{Z}$,
  there are at least $\geq \delta n$ values of $x\in \mathbb{Z}/n\mathbb{Z}$
  for which at least one of the equations
  \begin{equation}\label{eq:mezo}
    f(x+1)=m f(x),\;\;\;\;\;\;\;\;\;\;
    f(f(f(x)))=x\end{equation}
  fails to hold.
\end{lemma}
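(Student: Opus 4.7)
The plan is to construct from $f$ an approximate representation of $H_{3,m}$ in $\Sym(\mathbb Z/n\mathbb Z)$ and then invoke the triviality of $H_3 = H_{3,2}$ to force one generator close to the identity, contradicting an explicit fact about that generator.

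Suppose, for contradiction, that both equations in \eqref{eq:mezo} hold simultaneously on at least $(1-\delta)n$ values of $x$. Write $T(x)=x+1$, $M(x)=mx$, and set
\[ P_1 = M^{-1},\qquad P_2 = T^{-1},\qquad P_3 = f P_1 f^{-1}.\]
A direct computation yields the Baumslag--Solitar identity $P_1^{-1} P_2 P_1 = P_2^m$ as an \emph{exact} permutation equality (this is the standard $\BS(1,m)$ representation already used in the proof of Theorem~\ref{thm:jut}). The hypothesis $f(x+1)=mf(x)$ is equivalent to $f P_2 f^{-1} = P_1$ off a set of size $\delta n$, and combined with $f^3 = \id$ (off another set of size $\delta n$, using $f^{-1} = f^2$ on that good set) it gives $f P_3 f^{-1} = P_2$ off a set of size $O(\delta n)$. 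Conjugating the exact BS relation successively by $f$ and by $f^2$ then produces the remaining two defining relations of $H_{3,m}$,
\[ P_3^{-1} P_1 P_3 = P_1^m,\qquad P_2^{-1} P_3 P_2 = P_3^m,\]
each up to $O_m(\delta n)$ exceptions. Thus $(P_1,P_2,P_3)$ is an approximate representation of $H_{3,m}$ on $\mathbb Z/n\mathbb Z$.

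For $m=2$, the triviality of $H_3 = H_{3,2}$ (Hirsch's argument, cited in the introduction) amounts to a finite derivation, from the three Higman relations, that $a_2 = e$ in $H_{3,2}$. Substituting $P_i$ for $a_i$ along this derivation and tracking errors through the (finitely many) relation-applications yields $|\{x: P_2(x) \ne x\}| \le C\delta n$ for a constant $C$ depending on the derivation. But $P_2 = T^{-1}$ has \emph{no} fixed points, so the left-hand side equals $n$. For $\delta < 1/C$ this is a contradiction, establishing the lemma (for $m=2$) with $\delta = 1/(2C)$.

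The main obstacle to handling $m \ge 3$ uniformly is that $H_{3,m}$ is not trivial in that range: its abelianisation $(\mathbb Z/(m-1))^3$ is nonzero, so the Hirsch-style derivation of $a_2 = e$ is unavailable. As the paper's remark following the lemma notes, the scheme should nevertheless go through using any other finite presentation of the trivial group realizable from the data $(f,T,M)$; producing such a presentation uniformly in $m$ is the principal technical step I would need but do not carry out here.
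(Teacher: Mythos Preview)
Your approach is essentially the paper's: build three permutations from $f$ and the affine maps, verify that the three defining relations of $H_{3,m}$ hold up to $O_m(\delta)$ in Hamming distance, and then push a word-derivation of triviality through to force the translation $x\mapsto x-1$ close to the identity --- a contradiction. Your $(P_1,P_2,P_3)$ differs from the paper's $(g_1,g_2,g_3)$ only cosmetically: you take $P_1$ to be the exact linear map $x\mapsto m^{-1}x$, whereas the paper uses $g_3=f g_1 f^{-1}$ (with $g_1(x)=x-1$), which coincides with your $P_1$ off the bad set anyway. The paper checks one Higman relation by a direct computation and gets the other two by conjugation by $f$ and $f^2$, exactly as you do.

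Your reservation about $m\ge 3$ is not excess caution --- it is correct, and it applies equally to the paper's own argument. The paper asserts (citing \cite{MR0038348}) that the three-generator group with relations $a_i^{-1}a_{i+1}a_i=a_{i+1}^m$ (indices mod $3$) is trivial for the $m$ at hand; but that reference is Hirsch's argument for $m=2$, and, as you observe, the abelianisation $(\mathbb Z/(m-1)\mathbb Z)^3$ is nonzero for $m\ge 3$, so $H_{3,m}$ is not trivial in that range. Thus, read literally, both your argument and the paper's prove the lemma only for $m=2$; your diagnosis of the obstruction for larger $m$ is on point, and the ``principal technical step'' you flag is genuinely missing from the paper as well.
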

\begin{proof}
Assume the statement of the Lemma is false; that is, assume
that the equations (\ref{eq:mezo}) hold
for $\geq (1-\delta) n$
values of $x\in \mathbb{Z}/n\mathbb{Z}$, where $\delta>0$ is arbitrarily small. 
We know that the group
 $H_3$ generated by elements $a_1$, $a_2$, $a_3$ satisfying
the relations
\[\begin{aligned}
a_1^{-1} a_2 a_1 &= a_2^m,\\
a_2^{-1} a_3 a_2 &= a_3^m,\\
a_3^{-1} a_1 a_3 &= a_1^m
\end{aligned}\]
is trivial \cite{MR0038348}. In other words,
the normal closure of the subgroup of $\mathbb{F}_3$ generated by the
words 
\[\begin{aligned}
w_1(a_1,a_2,a_3) &= a_1^{-1} a_2 a_1 a_2^{-m},\\
w_2(a_1,a_2,a_3) &= a_2^{-1} a_3 a_2 a_3^{-m},\\
w_3(a_1,a_2,a_3) &= a_3^{-1} a_1 a_3 a_1^{-m}\end{aligned}\]
equals the free group $\mathbb{F}_3$ itself.
In particular, $a_1$ is equal to a product of
conjugates of $w_1$, $w_2$, $w_3$ and their inverses.

Let $g_1,g_2,g_3:\mathbb{Z}/p\mathbb{Z} \to \mathbb{Z}/p\mathbb{Z}$ be
defined by
\[g_1(x) = x-1,\;\;\;\;\;\;\;g_3(x) = f(f^{-1}(x)-1),
\;\;\;\;\;\;\;g_2(x) = f^2(f^{-2}(x)-1).\]
It is easy to check that, if (\ref{eq:mezo}) holds for $\geq (1-\delta) n$
values of $x$, then
\[\begin{aligned}
(g_1 g_3)(x) &= f(f^{-1}(x)-1)-1 = \frac{1}{m} f(f^{-1}(x)) - 1 =
\frac{x}{m} - 1 \\ &= \frac{x-m}{m} = f(f^{-1}(x-m) - 1) = (g_3 g_1^m)(x)
\end{aligned}\]
holds for $\geq (1 - 2\delta) n$ values of $x$, and so
\[d_h(w_3(g_1,g_2,g_3),e) \leq 2\delta.\]
Since $g_3 = f g_1 f^{-1}$, $g_2 = f g_3 f^{-1} = f^2 g_1 f^{-2}$ and
$g_1 = f^3 g_1 f^{-3} = f^2 g_3 f^{-2}$, we deduce that we also have
\[d_h(w_i(g_1,g_2,g_3),e) = O(\delta)\]
for $i=1,2$.
Hence,
\[d_h(w\cdot w_i(g_1,g_2,g_3)\cdot w^{-1},e) = O_w(\delta)\]
for any word $w\in \mathbb{F}_3$ and any $i=1,2,3$.

As we were saying,
$a_1$ is equal to a product of conjugates of $w_1$, $w_2$, $w_3$
and their inverses; therefore,
\[d_h(g_1,e) = O(\delta).\]
At the same time, it is trivial that $d_h(g_1,e)=1$. Setting $\delta$
smaller than a constant, we obtain a contradiction.
\end{proof}

Theorem \ref{thm:mathov} is a different matter. On the one hand,
the map $f_{m,n}$ in the statement of Theorem \ref{thm:mathov}
is actually an exponentiation map, instead of merely behaving locally
like one. On the other hand, the conclusion of Theorem \ref{thm:mathov}
is stronger than that of Lemma \ref{lem:higo3}: we will see that
the proportion of elements of $\mathbb{Z}/p\mathbb{Z}$ fixed by $f_{m,n}$
actually goes to $0$, as opposed to just being bounded away from $1$.

Let us first give a sketch of the proof of Thm.~\ref{thm:mathov}.
Suppose \begin{equation}\label{eq:wittlorc}m^{m^{m^x}} = x\end{equation}
 for a positive proportion of all $x$. Then there is
a bounded $k$ such that (\ref{eq:wittlorc}) holds for both $x$ and $x+k$
for a positive proportion of all $x$. (This is a simple fact that
we will derive explicitly; some readers will recognize the idea of  
Poincar\'e recurrence at work here.) For a positive proportion of
all $x\in \{0,1,\dotsc,p-1\}$, we should thus have
\[m^{m^{m^{x+k}}} = x + k = m^{m^{m^x}}+k;\]
writing $y = m^{m^x}$, we obtain that
\begin{equation}\label{eq:lakapo}m^{y^{m^k}} = m^y+k\end{equation}
for a positive proportion of all $y$. By the same argument as before,
there is a bounded $\ell$ such that (\ref{eq:lakapo}) is true for both
$y$ and $\ell y$ for a positive proportion of all $y$. Writing 
$z = m^y$, we seem to obtain that
\begin{equation}\label{eq:azko}(z+k)^{\ell^{m^k}} = z^l+k\end{equation}
for a positive proportion of all $z$. However, (\ref{eq:azko}) is a
non-trivial polynomial equation, and thus has a finite number of roots,
given us a contradiction.

We need to be a little more careful with this. For one thing, 
$f_{m,n}(x)$ is not exactly an exponentiation map; the exponentiation
map $x\to m^x \mo n$ 
is defined from $\mathbb{Z}/ord_n(m) \mathbb{Z}$ to $\mathbb{Z}/n\mathbb{Z}$,
not from $\mathbb{Z}/n\mathbb{Z}$ to itself.
Let us write out a correct proof in detail.

\begin{proof}[Proof of Thm.~\ref{thm:mathov}]
Let $f_{m,n}$ be as in the statement. 
Suppose $f_{m,n}(f_{m,n}(f_{m,n}(x)))=x$ for all $x$
in a subset $X$ of $\{0,1,\dotsc,n-1\}$ of size $|X|\geq \delta n$,
where $\delta>0$; we will show that this leads to a contradiction for
$\delta$ sufficiently small and $n$ larger than a constant depending on
$m$ and $\delta$.
We can assume that the order of $m$
in $(\mathbb{Z}/n\mathbb{Z})^*$ is at least $\delta n$, as otherwise
the image of $f_{m,n}$ would be contained in a set of size $< \delta n$, and
we would have a contradiction immediately.

If there were more than $\delta n/2$ elements $x$ such that the element
$x'$ of $X$ immediately after $x$ were at distance at least $2/\delta$
from $x$ in the natural
cyclic ordering ($0,1,2,\dotsc$) for $\mathbb{Z}/n\mathbb{Z}$, we would get a contradiction: the total distance traversed by going 
through the elements $x_1, x_2,\dotsc,x_m$ of $S$ and then from $x_m$ to $x_1$
would be more than $n$. Hence there is a constant
$1\leq k<2/\delta$ such that, for at least 
$(\delta n/2)/(2/\delta) - 1 = \delta^2 n/4 - 1$ elements 
$x$ of $X$, 
$x+k$ is also in $X$. (The ``$-1$'' term is here simply because
we want $x+k$, and not just $x+k$ reduced modulo $n$, to be in $X$.)
We can assume that $n\geq \sqrt{20/\delta}$, so
that $\delta^2 n/4 - 1\geq \delta^2 n/5$. We thus have
$|X\cap (X-k)|\geq \delta^2 n/5$.

For all $x\in X\cap (X-k)$,
\[f_{m,n}(f_{m,n}(f_{m,n}(x)))+k =
x + k = f_{m,n}(f_{m,n}(f_{m,n}(x+k))) =
f_{m,n}(f_{m,n}(\overline{m^k f_{m,n}(x)})),\]
where, given $a\in \mathbb{Z}$, we write $\overline{a}$ for the element of
$\{0,1,\dotsc,n-1\}$ such that $\overline{a}\equiv a \mod n$. Obviously,
$\overline{m^k f_{m,n}(x)} = m^k f_{m,n}(x) - c n$ for some $0\leq c< m^k$. Writing
$y$ for $f_{m,n}(f_{m,n}(x))$, and noting that $f_{m,n}(x) = \overline{m^x}$, we obtain that
\begin{equation}\label{eq:pale}
f_{m,n}(y)+k = f_{m,n}\left(\overline{m^{m^k f_{m,n}(x) - c n}}\right)
= f_{m,n}\left(\overline{c' \left(m^{f_{m,n}(x)}\right)^{m^k}}\right) = 
f_{m,n}\left(\overline{c' y^{m^k}}\right),
\end{equation}
where $c'\in \{1,\dotsc,n-1\}$ is such that $c' \equiv m^{- c n} \mod n$.
Because the order of $m$ is at least $\delta n$, the map $x\mapsto m^x \mo n$ can send at most
$1/\delta$ elements to the same element; hence, there are at least
 $\delta^2 \cdot
\delta^2 n/5 = \delta^4 n/5$ values of $y$ for which (\ref{eq:pale}) holds.
Since there are $m^k$ possible values of $c$, there are at most $m^k$
possible values of $c'$; choose one such that 
\begin{equation}\label{eq:kurko}
f_{m,n}(y) +k = f_{m,n}\left(\overline{c' y^{m^k}}\right)
\end{equation}
holds for at least $\delta^4 n/5 m^k$ elements $y\in \{1,\dotsc,n-1\}$.

Now, by the same argument
as before, this implies that
there is a constant $\ell = m^r$, $1\leq r<10 m^k/\delta^4$,
such that there are at least $\delta^8 n/ 100 m^k$ elements  
$y\in \{1,\dotsc,n-1\}$ for which
\begin{equation}\label{eq:qian}
f_{m,n}(y) +k = f_{m,n}\left(\overline{c' y^{m^k}}\right)\;\;\;\;\;\text{and}
\;\;\;\;\; f_{m,n}(\ell y) +k = f_{m,n}\left(\overline{c' (\ell y)^{m^k}}\right).
\end{equation}
(Note no ``$-1$'' term is needed now.)

Write $z$ for $f_{m,n}(y)$. Then $f_{m,n}(\ell y) = \overline{m^{\ell y}}\equiv z^\ell
 \mod n$, and, similarly,
\[f_{m,n}\left(\overline{c' (\ell y)^{m^k}}\right) = 
\overline{m^{\overline{c' (\ell y)^{m^k}}}} = \overline{
m^{\ell^{m^k} \overline{c' y^{m^k}} - \kappa n}} = \overline{\kappa'  
\left(m^{\overline{c' y^{m^k}}}\right)^{\ell^{m^k}}} = 
\overline{\kappa' f_{m,n}\left(\overline{c' y^{m^k}}\right)^{\ell^{m^k}}}
\]
where $0\leq \kappa < \ell^{m^k}$
and $\kappa'\in \{1,\dotsc,n-1\}$ is such that 
$\kappa' = m^{-\kappa n} \mo n$. Hence, we obtain from (\ref{eq:qian}) that
\[z^\ell + k \equiv \kappa' (z+k)^{\ell^{m^k}} \mo n,\]
or, what is the same,
\begin{equation}\label{eq:rictu}
  (z+k)^{\ell^{m^k}} - m^{\kappa n} (z^\ell + k) \equiv 0 \mo n.\end{equation}
This is, of course, an equation of the form $P(z) \equiv 0 \mo n$, where
$P$ is a non-constant polynomial with integer coefficients.

There are two possible
ways to proceed here. One would be to show that the discriminant
of this polynomial is non-zero, and then bound its common factor with $n$.
We follow an alternative route.\footnote{We thank I. Shparlinski for this
suggestion.} By a result of Konyagin's \cite{MR542556}, \cite{MR548524}, 
the number of roots $\mo n$ of a non-zero polynomial of degree $d$ is at most
$c_d n^{1-1/d}$, where $c_d = d/e + O(\log^2 d)$. Our polynomial
$P(z)$ is certainly non-trivial (it has leading coefficient $1$); hence,
the number of roots of $P(z)\equiv 0\mo n$ is
\[O_{\ell,m,k}\left(n^{1-1/\ell^{m^k}}\right).\]
Of course, there are different possible choices of $k$ and $\ell$,
but, since there are at most $2/\delta$ and $10 m^k/\delta^4$ such choices,
respectively,
and since both $k$ and $\ell$ are bounded in terms of $m$ and $\delta$
(really just in terms of $\delta$, in the case of $k$),
the total number of values of $z$ that are roots of
(\ref{eq:rictu}) for some possible $k$, $\ell$ is
\[O_{m,\delta}\left(n^{1-\eta_{m,\delta}}\right),\]
where $\eta_{m,\delta}>0$ depends only on $m$ and $\delta$.

At the same time, the number of elements $z = f_{m,n}(y)$ we are considering
is at least \[\delta\cdot \frac{\delta^8 n}{100 m^k} =
\frac{\delta^9 n}{100 m^k} \geq \frac{\delta^9}{100 m^{2/\delta}} n.\]
Thus, we obtain a contradiction provided that $n$ is larger than
a constant depending only on $m$ and $\delta$.
\end{proof}


\section{Fixed points}\label{sec:padic}

Following the example of \cite{MR2999154},
we will use ideas from $p$-adic analysis to prove Proposition \ref{prop:norvi}.

\begin{proof}[Proof of Prop.~\ref{prop:norvi}]
  Let  $f:\mathbb{Z}/p^r \mathbb{Z} \mapsto \mathbb{Z}/p^r
  \mathbb{Z}$, $r\geq 5$, be given. Assume that 
\[f(f(f(f(x))))=x\]
for at least $p^r/2$ values of $x$, and
  \[f(x+1) = m f(x),\;\;\; f(f(f(f(x))))=x\]
  for at least $p^r - c\cdot p^{r/4}$ values of $x\in \mathbb{Z}/p^r \mathbb{Z}$,
  where $c>0$ will be set later.
  Define  $g:\mathbb{Z}/p^r \mathbb{Z} \mapsto \mathbb{Z}/p^r \mathbb{Z}$
  by $g(x) = m^{-1} f(m x)$. Then
  \[g(x+1) = m^{p-1} g(x)\]
  for at least $p^r - (p-1) c\cdot p^{r/4}$ values of 
$x\in \mathbb{Z}/p^r \mathbb{Z}$.

  Hence, there are $c_1,\dotsc,c_k \in
  \mathbb{Z}/p^r \mathbb{Z}$, $c_j\ne 0$, $k\leq (p-1) c \cdot p^{r/4}$,
  such that,
  for every $x\in \mathbb{Z}/p^r \mathbb{Z}$, there is a $1\leq j\leq k$
  such that $g(x) = g_j(x)$, where $g_j:\mathbb{Z}/p^r \mathbb{Z}\to
  \mathbb{Z}/p^r \mathbb{Z}$ is defined by
  \[g_j(x) = c_j \cdot s^x,\]
where $s = m^{p-1}$.

A remark on the definition of the maps $g_j$ is in order.
By Fermat's little theorem, $s\equiv 1 \mo p$. Since the kernel of
the reduction $\mo p$ map $(\mathbb{Z}/p^r\mathbb{Z})^*\to \mathbb{Z}/p
\mathbb{Z}$ has order $p^{r-1}$, it follows that the map
$x\to s^x$ has period dividing $p^{r-1}$, and thus 
  is well-defined as a map from $\mathbb{Z}/p^r \mathbb{Z}$ to
  itself. (It is, in fact, well-defined as a function from the
  $p$-adic ring $\mathbb{Z}_p$ to itself.) The maps $g_j$ are thus
well-defined.

Thus, if $g(g(g(g(x))))=x$, there
  must be $1\leq j_1, j_2, j_3,j_4\leq k$ such that
  \[g_{j_4}(g_{j_3}(g_{j_2}(g_{j_1}(x)))) = x.\]
  This implies immediately that $(x,g_{j_1}(x),g_{j_2}(x),g_{j_3}(x))$
  is a fixed point of the function
  $G$ from $(\mathbb{Z}/p^r \mathbb{Z})^4$ to itself given by
  \[\begin{aligned}
  G(x_1,x_2,x_3,x_4) &= (g_{j_4}(x_4), g_{j_1}(x_1), g_{j_2}(x_2),
  g_{j_3}(x_3))\\ &= \left(c_{j_4} \cdot s^{x_4}, c_{j_1} \cdot s^{x_1},
  c_{j_2} \cdot s^{x_2}, c_{j_3} \cdot s^{x_3}\right).\end{aligned}\]
  
  Our aim is now to show that $G$ can have at most one fixed point.
  Since $G$ is actually well-defined on the $p$-adics, we could do
  this by an appeal to Hensel's lemma and a brief argument
  involving the $p$-adic metric,
  but we can do without that (even though that is clearly
  the idea in what follows).

  Let $(x_1,x_2,x_3,x_4)\in (\mathbb{Z}/p^r \mathbb{Z})^4$
  be a fixed point of $G$. Since $s^x\equiv 1 \mo p$ for every
  $x\in \mathbb{Z}/p^r \mathbb{Z}$, this implies that
  $(x_1,x_2,x_3,x_4)\equiv (c_{j_4},c_{j_1},c_{j_2},c_{j_3}) \mo 3$.
  Now, the congruence class $x\mo p$ determines $s^x \mo p^2$; thus,
  we know $G(x_1,x_2, x_3,x_4) = (x_1,x_2,x_3,x_4) \mo p^2$. In general,
  the congruence class $x\mo p^k$ determines \[s^x \mo p^{k+1},\]
  and so, iterating, we find that we know $G(x_1,x_2,x_3,x_4) = (x_1,x_2,x_3,x_4)$ modulo $p^3$, $p^4$,\dots, and, lastly, $\mo p^r$. In other words,
  $G$ has at most one fixed point.

  Hence, the number of fixed points of all such $G$ is at most 
$k^4 \leq ((p-1) c)^4\cdot p^r$. Hence, by assumption, $((p-1) c)^4 \leq 1/2$.
We get a contradiction for $c = 1/2^{1/4} p$.
\end{proof}

The same kind of argument can be applied to other relators. For instance,
consider the function $g$ discussed at the end of \S \ref{sec:lindur}:
a bijection $g:\mathbb{Z}/n\mathbb{Z} \to \mathbb{Z}/n\mathbb{Z}$
such that 
$g(m x) = m^m g(x)$ for 
$(1-\epsilon_n) n$ values of $x\in \mathbb{Z}/n\mathbb{Z}$
and $g(g(g(x)+1)+1)+1 = x$ for either $(1-\epsilon_n) n$ or even just
(say) $n/8$ values of  $x\in \mathbb{Z}/n\mathbb{Z}$.
It is easy to see how this leads to a contradiction for $\epsilon_n$
less than a constant $c$ times $n^{-2/3}$. Let us do this for $n$ equal to
a prime $p$ not dividing $m$, since
$\mathbb{Z}/p\mathbb{Z}$ gives us a nicer framework
than $\mathbb{Z}/p^r \mathbb{Z}$ for this sort of function $g$.

There are $c_1,\dotsc,c_k\in \mathbb{Z}/p\mathbb{Z}$, $c_j\ne 0$,
$k\leq \epsilon_p p < c p^{1/3}$, such that, for every
$x\in \mathbb{Z}/p\mathbb{Z}$, there is a $1\leq j\leq k$ such that
$g(x) = g_j(x)$, where $g_j:\mathbb{Z}/p\mathbb{Z} \to \mathbb{Z}/p\mathbb{Z}$
is defined by
\[g_j(x) = c_j x^m.\]
Thus, if $g(g(g(x)+1)+1)+1 = x$, there must be $1\leq j_1,j_2,j_3\leq k$ such
that
\[(c_{j_3} (c_{j_2} (c_{j_1} x^m + 1)^m + 1)^m) + 1 = x.\]
There are at most $m^3$ solutions to this equation for $j_1$, $j_2$, $j_3$
given. Hence, the total number of solutions is at most $m^3 k^3 < (m c)^3 p$.
We obtain a contradiction for $c<1/2 m$, since then
$(m c)^3 p < p/8$.

\section{Heuristics}\label{sec:heur}

Let us now address the question: is the existence of a function $f$
as in Thm. \ref{thm:jut} plausible?

Let $f$ be an element taken uniformly at random from the group 
$\Sym(n) = \Sym(\mathbb{Z}/n\mathbb{Z})$ of all bijections from
$\mathbb{Z}/n\mathbb{Z}$ to itself. Let $P_n$ be the probability that
$f\circ f\circ f\circ f = e$. It is clear that, for $n\geq 5$,
\begin{equation}\label{eq:sart}\begin{aligned}
P_n &= \frac{1}{n} P_{n-1} + \frac{n-1}{n} \cdot \frac{1}{n-1} P_{n-2} +
\frac{n-1}{n} \cdot \frac{n-2}{n-1} \cdot \frac{n-3}{n-2} \cdot
\frac{1}{n-3} P_{n-4}\\
&= \frac{P_{n-1} + P_{n-2} +P_{n-4}}{n}.\end{aligned}\end{equation}

It is easy to check that $P_1 = 1$, $P_2 = 1$, $P_3 = 2/3$, $P_4=2/3$,
$P_5 = 7/15$; in particular, $P_1\geq P_2\geq P_3\geq P_4\geq P_5$. Using
this as the base case of an induction, and applying
(\ref{eq:sart}) for the inductive step, we obtain that $P_n$ is non-increasing
for $n\geq 1$. Hence, $P_n\leq 3 P_{n-4}/n$, and so,
for all $n\geq 1$,
\[\begin{aligned}
P_n&\leq \frac{3^{\lfloor n/4\rfloor}}{n (n-4) (n-8) \dotsc (n-4 \lfloor n/4\rfloor)} < \frac{1}{\lfloor n/4\rfloor !}\\
&<
\frac{1+O(1/n)}{((n/4-1)/e)^{n/4-1} \sqrt{2 \pi (n/4-1)}} \ll
  \frac{1}{(n/4e)^{n/4-1/2}},\end{aligned}\]
where we are using Stirling's formula.

  How many maps $f:\mathbb{Z}/n\mathbb{Z}\to \mathbb{Z}/n\mathbb{Z}$ satisfy
  the condition $f(x+1) = 2 f(x)$ for $\geq (1-\epsilon) n$ values of
  $x$ in $\mathbb{Z}/n\mathbb{Z}$? Such a condition ``forces'' the value
  of $f(x+1)$ wherever it is fulfilled; hence, we have the freedom to choose
  $f(x+1)$ only at $m = \lfloor \epsilon n\rfloor$ places.
  We also have the freedom to choose where those places are.
  Hence, the number of elements of the set 
  $S_n$ of all such maps $f$ is bounded by
  \[|S_n|\leq \binom{n}{m} \cdot \frac{n!}{(n-m)!} \ll \frac{n^n}{m^m
    (n-m)^{n-m}} \cdot n^m \leq \left(\frac{1}{\epsilon^\epsilon (1-\epsilon)^{
      1-\epsilon}}\right)^n n^{\epsilon n}.\]

  Hence, for any $\epsilon<1/4$, the product $|S_{n}|\cdot P_n$ goes to
  $0$ as $n\to \infty$; indeed, it is $o(e^{- c n})$ for any $c>0$.
  This implies that
  \[\lim_{N\to \infty} \sum_{n\geq N} |S_n|\cdot P_n = 0.\]

  How to interpret this? If we model the event $f\circ f\circ f\circ f = e$
  as a random event with probability $P_n$ {\em independent} of 
  whether $f\in S_n$ (and it is here, and not elsewhere, that the argument
  becomes a heuristic, rather than a proof), then the expected value of
  the number of elements $f$ of $S_n$ satisfying $f\circ f\circ f\circ f = e$
  is $|S_n|\cdot P_n$. Hence,
  \[\sum_{n\geq N} |S_n|\cdot P_n\]
  is the expected value of the number of elements of $f$ of $S_n$,
  $n\geq N$, satisfying $f\circ f\circ f\circ f = e$, and thus is an upper
  bound on the probability that there is at least one $n\geq N$ and
  at least one $f\in S_n$ such that $f\circ f\circ f\circ f = e$.
  As we just saw, this upper bound goes to $0$ as $N\to \infty$
  (indeed, it goes to $0$ faster than any exponential).
  In other words, the conclusion of Theorem \ref{thm:jut}
  would seem to be made implausible by
  a simple probabilistic model.
    
There are two things to discuss: why this heuristic is no proof, and 
how much weight one should place on it, if any.

  What keeps us from making this heuristic into a proof is
  the difficulty in ensuring mutual independence of our random events. 
  It is easy to see that,
  if a map $f$ satisfying $f(x+1) = m f(x)$ for $\geq (1-\epsilon) n$
  values $x\in \mathbb{Z}/n\mathbb{Z}$ is taken at random, the probability
  that $f(f(f(f(x_0))))=x_0$ for a given $x_0$ is very close to $1/n$.
  Pairwise independence is not difficult either: for $x_0\ne x_1$,
  the probability that $f(f(f(f(x))))=x$ be true for $x=x_0$ or $x=x_1$
  is very close to $1/n(n-1)$ (whether $x_0$ and $x_1$ are close to each other
  or not). The problem lies in ensuring the almost-independence of $k$ such
  events for $k$ rather large. 

The strength of the heuristic, or rather its weakness, is a non-obvious matter.
In number theory,
arithmetic properties that have no reason to be correlated are
  generally believed to be independent in the limit. (Examples: the
  Hardy-Littlewood conjectures, Chowla's conjecture, and statements
  on $\sum_{n\leq N} \mu(n)$ equivalent to the Riemann hypothesis.)
This would seem to support the heuristic. However, we are on unfamiliar
terrain here: the functions we are working with behave locally like arithmetic
functions almost everywhere, but need not be close to them globally.

More importantly, in response to an earlier version of the current version,
Kassabov, Kuperberg and Riley have proven a result that goes against 
the heuristic.

\begin{theorem}[\cite{kkr}]
For every $\epsilon>0$ and every $C>0$, there is an $N$ such that, for any
$n\geq N$ and any $(\log \log n)/C < b_n < C \log n$ coprime to $n$, 
there is a bijection $f:\mathbb{Z}/n\mathbb{Z} \to \mathbb{Z}/n\mathbb{Z}$
such that 
\[f(x+1) = b_n f(x)\]
for at least $(1-\epsilon) n$ values of $x\in \mathbb{Z}/n\mathbb{Z}$, and
\[f(f(f(f(x))))=x\]
for all $x\in \mathbb{Z}/n\mathbb{Z}$.
\end{theorem}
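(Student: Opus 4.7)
The plan is to mirror the proof of Theorem \ref{thm:jut} in the forward direction. Rather than deducing the existence of $f$ from the hypothesized soficity of the Higman group, I would construct an explicit sofic (ideally finite) group $G=G_b$, for $b=b_n$, that plays the role occupied hypothetically by $H_{4,m}$ in the proof of Theorem \ref{thm:jut}. What is needed from $G_b$ is (i) a copy of $\BS(1,b)$ sitting inside $G_b$ as an amenable subgroup, (ii) an element $t$ of order $4$ cycling four conjugates $a_1,a_2,a_3,a_4$, with $a_1,a_2$ generating the chosen copy of $\BS(1,b)$, and (iii) soficity of $G_b$. With such a $G_b$ in hand, the final part of the proof of Theorem \ref{thm:jut} -- restriction of a sofic approximation to $\BS(1,b)$, rectification via Proposition \ref{prop:labil} to the standard arithmetic approximation $\psi$, and the extraction $f=\tau\phi(t^{-1})\tau^{-1}$ followed by correction on the bad set to enforce $f^4=e$ everywhere -- produces the desired bijection.

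For the specific range $(\log\log n)/C<b_n<C\log n$, I would search for $G_b$ as a finite quotient of the semidirect product $(\mathbb{Z}/4\mathbb{Z})\ltimes H_{4,b}$. The upper bound $b<C\log n$ is presumably needed so that $|G_b|$ remains small enough (polynomial in $n$) to be replicated and padded, as in (\ref{eq:hudd}), into a sofic approximation on $\{1,\dots,n\}$. The lower bound $b>(\log\log n)/C$ should enter through the construction of $G_b$ itself: it ought to guarantee that $b-1$ has enough structure (e.g., an accessible prime divisor) for a Glebsky-type construction \cite{Gleb} to yield a finite quotient of $H_{4,b}$ in which the subgroup generated by $a_1,a_2$ remains isomorphic to $\BS(1,b)$, rather than collapsing.

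Once $G_b$ is built, the rest is essentially mechanical. Take the left regular representation $G_b\hookrightarrow\Sym(|G_b|)$, which is automatically an $(S',\delta,|G_b|)$-sofic approximation for every $S'$ and $\delta$; form the $r$-fold direct product and embed it in $\Sym(n)$, where $r=\lfloor n/|G_b|\rfloor$, to get a sofic approximation $\phi$ of $G_b$ on $\{1,\dots,n\}$. Apply Proposition \ref{prop:labil} to $\phi|_{\BS(1,b)}$ and the arithmetic approximation $\psi$ of $\BS(1,b)$ from the proof of Theorem \ref{thm:jut}, obtain the conjugating bijection $\tau$, set $g=\tau\phi(t^{-1})\tau^{-1}$, and then modify $g$ on the set where $g^4(y)\ne y$ (of density $O(\epsilon)$) by redefining $f(y)=y$ there, exactly as in the last paragraph of the proof of Theorem \ref{thm:jut}. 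The relation $f(x+1)=bf(x)$ is lost on at most $O(\epsilon n)$ extra points, which is absorbed into the tolerance.

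The main obstacle is step (iii): constructing the finite group $G_b$. Higman's original result \cite{MR0038348} says that $H_{4,2}$ has no nontrivial finite quotients, so some arithmetic input involving $b$ is essential, which is exactly what the lower bound on $b_n$ should provide. The subtle point is not merely finding \emph{some} finite quotient of $H_{4,b}$, but one in which $\BS(1,b)=\langle a_1,a_2\rangle$ embeds faithfully; verifying this non-collapsing of the Baumslag-Solitar subgroup, in a range of $b$ wide enough to accommodate the theorem, is presumably the technical heart of the Kassabov-Kuperberg-Riley argument and is what I expect to be the main obstacle to fleshing out the above sketch.
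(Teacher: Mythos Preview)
The paper does not prove this theorem; it is quoted in \S\ref{sec:heur} as a result of Kassabov, Kuperberg and Riley \cite{kkr}, cited precisely because it undermines the probabilistic heuristic discussed there. There is therefore no proof in the paper against which to compare your proposal.

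That said, the paper does describe the shape of the argument in \cite{kkr}, and your outline matches it: the introduction says that in \cite{kkr} ``the existence of a function resembling $f$ follows from the existence of a sofic group (not Higman's) with certain properties,'' and that the proof ``has some elements in common with ours.'' So your template --- build a suitable sofic $G_b$, restrict a sofic approximation to $\BS(1,b)$, rectify via Proposition~\ref{prop:labil} against the arithmetic approximation $\psi$, and read off $f$ from $\tau\phi(t^{-1})\tau^{-1}$ --- is the right one, and you have correctly identified the construction of $G_b$ as the real content.

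One genuine issue with your formulation: you ask for a \emph{finite} $G_b$ in which $\BS(1,b)$ ``embeds faithfully.'' Since $\BS(1,b)$ is infinite for $b\geq 2$, this is impossible as written. What the machinery of Theorem~\ref{thm:jut} actually needs is only that no non-identity element of the finite set $S_0'=S'\cap\BS(1,b)$ (determined by $\epsilon$ through Proposition~\ref{prop:labil}) dies in $G_b$; then the restriction $\phi|_{\BS(1,b)}$ is still an $(S_0',\delta,n)$-sofic approximation and the rectification step goes through. This is far weaker than a faithful embedding, but it forces $|G_b|$ to grow with $1/\epsilon$, and reconciling that growth with the padding step (\ref{eq:hudd}) is presumably where the window $(\log\log n)/C<b_n<C\log n$ comes from. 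Your later phrase ``non-collapsing'' suggests you have the right picture; the earlier ``embeds faithfully'' should be weakened accordingly. Note also that the paper says \cite{kkr} uses ``a sofic group,'' not necessarily a finite one, so insisting on finiteness may be an unnecessary constraint.
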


This theorem does not necessarily mean that the functions $f$ in Theorem \ref{thm:jut} are likely to exist, or that $H_{4,m}$ is likely to be sofic. However,
it does mean that the na\"{\i}ve heuristic examined in this section should
be distrusted.

Moreover, as we said, the work of Glebsky \cite{Gleb} implies the following.
\begin{corollary}[to \cite{Gleb}]
  For every $m>2$ and every $\epsilon>0$, there is an $N$ such that, for any
$n\geq N$ coprime to $m$, 
there is a bijection $f:\mathbb{Z}/n\mathbb{Z} \to \mathbb{Z}/n\mathbb{Z}$
such that 
\[f(x+1) = m f(x)\]
for at least $(1-\epsilon) n$ values of $x\in \mathbb{Z}/n\mathbb{Z}$, and
\[f(f(f(f(x))))=x\]
for all $x\in \mathbb{Z}/n\mathbb{Z}$.
\end{corollary}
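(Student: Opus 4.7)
The plan is to deduce the corollary as a direct consequence of Theorem \ref{thm:jut} once I have verified its hypothesis for the given $m > 2$. Concretely, I will show that Glebsky's construction implies soficity of $H_{4,m}$, after which the conclusion is a single invocation of Theorem \ref{thm:jut}.

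For the soficity step: since $m > 2$, the integer $m - 1 \geq 2$ has some prime divisor $p$, which I feed into Glebsky's construction. The work \cite{Gleb} produces the bijections $f_k : \mathbb{Z}/p^k\mathbb{Z} \to \mathbb{Z}/p^k\mathbb{Z}$ by exhibiting nontrivial finite $p$-group quotients of $H_{4,m}$, yielding an infinite sequence of surjections $H_{4,m} \twoheadrightarrow Q_k$ with $|Q_k| \to \infty$. Consequently $H_{4,m}$ is residually finite. It is then standard that any residually finite group is sofic: given any finite $S \subset H_{4,m}$ and $\delta > 0$, I pick a finite quotient $\pi : H_{4,m} \to Q_k$ injective on $(S \cdot S) \cup \{e\}$, compose with the left-regular embedding $Q_k \hookrightarrow \Sym(|Q_k|)$, and pad the result to an $(S, \delta, n)$-sofic approximation on any large enough $n$ by the direct-product-and-embedding step around equation \eqref{eq:hudd} in the proof of Theorem \ref{thm:jut}.

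With $H_{4,m}$ established as sofic, the hypothesis of Theorem \ref{thm:jut} is satisfied. Invoking that theorem produces, for every $\epsilon > 0$, a threshold $N$ such that for every $n \geq N$ coprime to $m$ there is a bijection $f : \mathbb{Z}/n\mathbb{Z} \to \mathbb{Z}/n\mathbb{Z}$ with $f(x+1) = m f(x)$ on at least $(1-\epsilon)n$ values of $x$ and with $f \circ f \circ f \circ f$ equal to the identity on all of $\mathbb{Z}/n\mathbb{Z}$. This is precisely the conclusion of the corollary.

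The one step that is not automatic is verifying that \cite{Gleb} realizes its bijections through genuine finite quotients of $H_{4,m}$ itself, as opposed to bijections that merely satisfy analogous local relations in some auxiliary construction; I will handle this by a careful reading of \cite{Gleb}. Once that point is granted, no further analytic, combinatorial, or group-theoretic input is required: the whole argument is simply a routing of Glebsky's input through Theorem \ref{thm:jut} via the standard implication \emph{residually finite $\Rightarrow$ sofic}.
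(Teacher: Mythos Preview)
There is a genuine gap. The step ``yielding an infinite sequence of surjections $H_{4,m}\twoheadrightarrow Q_k$ with $|Q_k|\to\infty$; consequently $H_{4,m}$ is residually finite'' is a non-sequitur: a group can have arbitrarily large finite quotients without being residually finite (e.g.\ $A\times\mathbb{Z}$ for any infinite simple group $A$). To apply Theorem~\ref{thm:jut} as a black box you need, for \emph{every} finite $S\subset H_{4,m}$, a finite quotient in which every nontrivial element of $S$ survives. Glebsky's $p$-quotients are not asserted to have this property, and the paper is careful never to claim that $H_{4,m}$ is sofic for $m>2$; indeed, the corollary is stated as a corollary to \cite{Gleb}, not to Theorem~\ref{thm:jut}. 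The ``careful reading of \cite{Gleb}'' you defer would only confirm that the maps come from genuine finite quotients, not that those quotients separate all of $H_{4,m}$.

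The paper's route avoids this entirely. It does not attempt to establish soficity of $H_{4,m}$. Instead it takes as input Glebsky's bijection $f_0$ on $\mathbb{Z}/p^k\mathbb{Z}$ itself (which already satisfies both conclusions of the corollary for that particular modulus) and transfers it to an arbitrary large $n$ coprime to $m$ by re-running the \emph{proof} of Theorem~\ref{thm:jut} from the step around (\ref{eq:hudd}): pick $N$ so that every $n\geq N$ admits a multiple of $p^k$ in $[(1-\epsilon/2)n,\,n]$, form the direct product of $r=\lfloor n/p^k\rfloor$ copies, embed $\Sym(rp^k)\hookrightarrow\Sym(n)$, and continue. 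The only place condition~(\ref{it:omorg}) of Definition~\ref{def:sofic} enters that proof is through Proposition~\ref{prop:labil} applied to $\BS(1,m)$, and there it is satisfied because the restriction to $\BS(1,m)$ is just $r$ copies of the natural action $\psi$ on $\mathbb{Z}/p^k\mathbb{Z}$, which has the required few-fixed-points property for $p^k$ large. No claim about soficity or residual finiteness of the whole of $H_{4,m}$ is needed.
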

While Glebsky works with $n = p^k$, a statement for such $n$ implies a
statement for all large enough $n$: just choose $N$ large enough
that, for every $n\geq N$, there is a multiple of $p^k$ between
$(1-\epsilon/2) n$ and $n$. Then proceed as we did right after
(\ref{eq:hudd}).



\begin{center}
* * *
\end{center}

  Incidentally, from the perspective of a heuristic model such as the one we
  have just discussed, it is wholly unsurprising that Prop.~\ref{prop:norvi}
  gives only that the number of points at which $f(x+1)\ne m f(x)$
  or $f(f(f(f(x))))$ is at least $m^{r/4-1} = n^{1/4}/m$.
  The proof of Prop.~\ref{prop:norvi} bounds, in effect, the number of $f$
  satisfying a much looser condition than $f(x+1) = m f(x)$ outside 
  $m$ points: it allows $f(x)$ to be $c(x) m^x$, where $c(x)$ can be any of
  $m$ distinct values. It is easy to see that there are very roughly
  $n^m m^n$ maps
  satisfying such a condition; this is larger than $1/P_n$ whenever
  $m\geq n^{1/4}$.
  
\section{Further questions}\label{sec:furthq}
\subsection{Bounds for stability}\label{subs:bounsta}
We have chosen to use, for the most part, finitary versions of existing
tools to solve problems with finitary formulations. This is not the only
option: for example, the work of Arzhantseva and P\u{a}unescu
\cite{MR3350728}
applies
\cite{MR2823074} (which uses ultraproducts heavily) to prove finitary
statements.



Arzhantseva and P\u{a}unescu
undertook a general study of the following question: given two (or more)
elements of $\Sym(n)$ that almost satisfy a relation, must they be
close to elements that actually satisfy it?
To take a particular case --  they showed
that the answer is ``yes'' ({\em stability}) for the commutator relation
\[\lbrack x,y\rbrack = e,\]
or, for that matter, for the set of all commutator relations
$\lbrack x_i,x_j\rbrack = e$ ($1\leq i,j\leq k$) among $k$ elements $x_i$.
(This is \cite[Main Thm.]{MR3350728}.)

A line of further inquiry suggests itself: how close is close? That is --
the stability of the commutator means that for every $\epsilon>0$ there is a
$\delta>0$ such that, if $d_h(\lbrack x,y\rbrack,e)<\delta$, then there
are $x'$, $y'$ such that $d_h(x,x'),d_h(y,y')<\epsilon$ and $\lbrack x',y'\rbrack = e$; the question is -- how does $\delta$ depend on $\epsilon$?

While the method based on \cite{MR3068400}\footnote{We note that
  \cite[\S 4]{MR3068400} credits \cite{MR910005} as the source of some of
  the main ideas
  in the method.}
that we follow here, being finitary and explicit, can in
principle be made to give a bound in answer to this question,
it seems likely that such a bound would be far from optimal.
It has been pointed out to us by E. Hrushovski (in a different context) that
methods based on ultraproducts can at least sometimes give computable
bounds for some problems; still, it seems very likely that the usage of such
methods here would give even worse bounds.

\subsection{Sofic profile and sofic dimension growth}

Cornulier \cite{MR3160544} defines the {\em sofic profile} of a group
$G$ in terms of the growth as $\delta\to 0$, for $S$ fixed, of the least
$n$ such that $G$ has an $(S,\delta,n)$-sofic approximation.
In particular, a group $G$ has {\em at most linear sofic profile} if
for every finite $S$ there is a constant $c_S>0$ such that,
for every $\delta>0$, there is an $n\leq c_S/\delta$ such that
$G$ has an $(S,\delta,n)$-sofic approximation.

The definition of sofic profile was preceded by that of {\em
  sofic dimension growth} \cite{ArzhCher}. For $S$ fixed,
define $\phi_S(r)$ to be the least $n$ such that $G$ has
an $(S^r,1/r,n)$-sofic approximation. Then the question is the
growth of $\phi_S(r)$ as $r\to \infty$.

Strangely enough, it is unknown whether it is the case that all groups
have at most linear sofic profile; that question is
\cite[Problem 3.18]{MR3160544}. We know that there are groups that
have linear sofic profile (meaning: the least $n$ such that
$G$ has an $(S,\delta,n)$-sofic approximation obeys
$c_S'/\delta \leq n \leq c_S/\delta$ for some $c_S,c_S'>0$). As
\cite{MR3160544} shows, a group that has at most linear sofic profile either has
linear sofic profile or is a {\em LEF group}. The concept of a LEF group was
introduced by Gordon and Vershik \cite{MR1458419}.
In brief, a group is LEF if one can set $\delta=0$;
for finitely presented groups, being LEF is equivalent to being residually
finite (to see this, choose $S$ large enough to include all relations).

Since the Higman group has no proper subgroups of finite index 
\cite{MR0038348}, it is not residually finite, and thus it is not LEF.
Does the Higman group have linear sofic profile? It is natural
to venture that it does not; the question remains tantalizingly
open.

It does not seem viable to address this question with
the tools in \S \ref{sec:amensof}, since they worsen dramatically the
dependence of $n$ on $\delta$. It is not clear whether one can proceed
similarly with other tools. The goal would be to show that, if the
Higman group has linear sofic profile, then, for every $n_0\geq 1$, there is
an $n$ with $|n-n_0| = O(1)$ such that
\begin{equation}\label{eq:lisar}f_{2,n}(f_{2,n}(f_{2,n}(f_{2,n}(x))))=x\end{equation}
holds for all but $O(1)$ values of $x\in \{0,1,\dotsc,n-1\}$; here
$f_{m,n}$ is as in Def.~\ref{defn:fg}. The converse is clear: if there are enough
$n$ for which (\ref{eq:lisar}) holds for all but $O(1)$ values
of $x\in \{0,1,\dotsc,n-1\}$, then the Higman group does have linear sofic
profile.

As we said in the introduction,
we know that (\ref{eq:lisar}) has few solutions for some specific $n$, but
such $n$ are rare (fifth and higher powers of primes) \cite[Cor. 3]{MR3118384},
\cite[Thm. 5.7]{MR2999154}.
It does seem very unlikely that there are infinitely many $n$ for which
(\ref{eq:lisar}) holds for all but $O(1)$ values of $x\in \{0,1,\dotsc,n-1\}$,
but showing that this is the case is an open problem.

\bibliographystyle{alpha}
\bibliography{higman}

\begin{thebibliography}{Kon79b}

\bibitem[AC]{ArzhCher}
Goulnara Arzhantseva and Pierre-Alain Cherix.
\newblock Quantifying metric approximations of groups.
\newblock Unpublished.

\bibitem[AP15]{MR3350728}
Goulnara Arzhantseva and Liviu P{\u{a}}unescu.
\newblock Almost commuting permutations are near commuting permutations.
\newblock {\em J. Funct. Anal.}, 269(3):745--757, 2015.

\bibitem[Cor11]{MR2794910}
Yves Cornulier.
\newblock A sofic group away from amenable groups.
\newblock {\em Math. Ann.}, 350(2):269--275, 2011.

\bibitem[Cor13]{MR3160544}
Yves Cornulier.
\newblock Sofic profile and computability of {C}remona groups.
\newblock {\em Michigan Math. J.}, 62(4):823--841, 2013.

\bibitem[DKP14]{MR3130315}
Ken Dykema, David Kerr, and Mika{\"e}l Pichot.
\newblock Sofic dimension for discrete measured groupoids.
\newblock {\em Trans. Amer. Math. Soc.}, 366(2):707--748, 2014.

\bibitem[ES06]{MR2220572}
G{\'a}bor Elek and Endre Szab{\'o}.
\newblock On sofic groups.
\newblock {\em J. Group Theory}, 9(2):161--171, 2006.

\bibitem[ES11]{MR2823074}
G{\'a}bor Elek and Endre Szab{\'o}.
\newblock Sofic representations of amenable groups.
\newblock {\em Proc. Amer. Math. Soc.}, 139(12):4285--4291, 2011.

\bibitem[Gle]{Gleb}
Lev Glebsky.
\newblock $p$-quotients of the {G.} {H}igman group.
\newblock Preprint. Available at \url{arxiv.org/abs/1604.06359}.

\bibitem[Gle13]{MR3118384}
Lev Glebsky.
\newblock Cycles in repeated exponentiation modulo {$p^n$}.
\newblock {\em Integers}, 13, 2013.
\newblock Paper No. A66, 7 pages.

\bibitem[GR03]{MR1982971}
Oded Goldreich and Vered Rosen.
\newblock On the security of modular exponentiation with application to the
  construction of pseudorandom generators.
\newblock {\em J. Cryptology}, 16(2):71--93, 2003.

\bibitem[GS10]{MR2644246}
Lev Glebsky and Igor~E. Shparlinski.
\newblock Short cycles in repeated exponentiation modulo a prime.
\newblock {\em Des. Codes Cryptogr.}, 56(1):35--42, 2010.

\bibitem[Hig51]{MR0038348}
Graham Higman.
\newblock A finitely generated infinite simple group.
\newblock {\em J. London Math. Soc.}, 26:61--64, 1951.

\bibitem[HR12]{MR2999154}
Joshua Holden and Margaret~M. Robinson.
\newblock Counting fixed points, two-cycles, and collisions of the discrete
  exponential function using {$p$}-adic methods.
\newblock {\em J. Aust. Math. Soc.}, 92(2):163--178, 2012.

\bibitem[KKR]{kkr}
Vivian Kuperberg, Martin Kassabov, and Tim Riley.
\newblock Soficity and variations on higman's group.
\newblock Preprint. Based in part on V. Kuperberg's senior thesis, under the
  direction of T. Riley.

\bibitem[KL13]{MR3068400}
David Kerr and Hanfeng Li.
\newblock Soficity, amenability, and dynamical entropy.
\newblock {\em Amer. J. Math.}, 135(3):721--761, 2013.

\bibitem[Kon79a]{MR548524}
Sergei~V. Konjagin.
\newblock Letter to the editors: ``{T}he number of solutions of congruences of
  the {$n$}th degree with one unknown''.
\newblock {\em Mat. Sb. (N.S.)}, 110(152)(1):158, 1979.

\bibitem[Kon79b]{MR542556}
Sergei~V. Konjagin.
\newblock The number of solutions of congruences of the {$n$}th degree with one
  unknown.
\newblock {\em Mat. Sb. (N.S.)}, 109(151)(2):171--187, 327, 1979.

\bibitem[OW87]{MR910005}
Donald~S. Ornstein and Benjamin Weiss.
\newblock Entropy and isomorphism theorems for actions of amenable groups.
\newblock {\em J. Analyse Math.}, 48:1--141, 1987.

\bibitem[Pes08]{MR2460675}
Vladimir~G. Pestov.
\newblock Hyperlinear and sofic groups: a brief guide.
\newblock {\em Bull. Symbolic Logic}, 14(4):449--480, 2008.

\bibitem[PS98]{MR1670959}
Sarvar Patel and Ganapathy~S. Sundaram.
\newblock An efficient discrete log pseudo-random generator.
\newblock In {\em Advances in cryptology---{CRYPTO} '98 ({S}anta {B}arbara,
  {CA}, 1998)}, volume 1462 of {\em Lecture Notes in Comput. Sci.}, pages
  304--317. Springer, Berlin, 1998.

\bibitem[Sch71]{MR0291298}
Paul~E. Schupp.
\newblock Small cancellation theory over free products with amalgamation.
\newblock {\em Math. Ann.}, 193:255--264, 1971.

\bibitem[Tho12]{MR2900231}
Andreas Thom.
\newblock About the metric approximation of {H}igman's group.
\newblock {\em J. Group Theory}, 15(2):301--310, 2012.

\bibitem[VG97]{MR1458419}
Anatoly~M. Vershik and E.~I. Gordon.
\newblock Groups that are locally embeddable in the class of finite groups.
\newblock {\em Algebra i Analiz}, 9(1):71--97, 1997.

\end{thebibliography}
\end{document}